\newtheorem{theorem}{Theorem}[section]
\newtheorem{lem}[theorem]{Lemma}
\newtheorem{proposition}[theorem]{Proposition}
\newtheorem{corollary}[theorem]{Corollary}
\theoremstyle{definition}
\newtheorem{defn}[theorem]{Definition}
\newtheorem{rem}[theorem]{Remark}
\newtheorem{example}[theorem]{Example}
\title{\textbf{ On the Cartan torsion of Minkowskian product of Finsler manifolds}}
\author{Ranadip Gangopadhyay\thanks{Department of Mathematics, VIGNAN'S Foundation for Science, Technology and Research (Deemed to be University),Off Campus Hyderabad, Hyderabad-501512, Telangana, India, gangulyranadip@gmail.com}}
\date{}
\begin{document}
	
	\maketitle
	
	\begin{abstract}
		In this paper, we investigate the Cartan torsion and mean Cartan torsion of the Minkowskian product of two Finsler metrics. We provide explicit expressions and show that the torsion splits if and only if the Minkowskian product is Euclidean. Additionally, a condition is given for the boundedness of the mean Cartan torsion norm in the Euclidean case.
	\end{abstract}
	
\section{Introduction}

Among the fundamental tensors in Finsler geometry, the Cartan torsion plays a crucial role in measuring the deviation of a Finsler manifold from being Riemannian. A deep understanding of the Cartan torsion is therefore essential for the study of various geometric properties of Finsler manifolds. In this context, the construction of new Finsler manifolds from existing ones provides an important technique for exploring their geometric behavior. One such construction is the Minkowskian product of Finsler manifolds, introduced by Okada \cite{OKD}, which combines two Finsler metrics into a product structure on the Cartesian product of the underlying manifolds. Recently, Li et al. \cite{LHZZ} investigated dually flat and projectively flat properties of such Minkowskian products and prove that a Minkowskian product Finsler metric is dually and projectively flat if and only if it is Minkowskian. In \cite{HLBN}, the authors establish that the spray coefficients split, and the product manifold inherits Berwald-type or Landsberg-type properties exactly when the factor manifolds do. In \cite{GGPT}, authors prove that when a Minkowskian product Finsler manifold is Einstein, then either the manifold is Ricci-flat or each component manifold is itself Einstein with the same scalar curvature function. Recently, Tian et al. studied some Non-Riemannian curvature properties of Minkowskian product Finsler manifolds \cite{THLE}.

Motivated by these developments, this paper explicitly computes the Cartan and mean Cartan torsion components of Minkowskian product Finsler manifolds and establishes necessary and sufficient conditions for their splitting.

The remainder of the paper is organized as follows. Section 2 provides essential definitions and notations from Finsler geometry, including the Cartan torsion and other associated tensors. It also introduces the concept of the Minkowskian product of Finsler metrics along with its fundamental metric tensor. Section 3 focuses on the analysis of the Cartan torsion. In Proposition~\ref{p3.1}, we explicitly compute the Cartan torsion of Minkowskian product Finsler manifolds. Furthermore, Theorem~\ref{T3.1} establishes that the Cartan torsion splits if and only if the Minkowskian product is  Euclidean. Section 4 investigates the mean Cartan torsion and presents results analogous to those in Section 3.  In Section 5, we examine the conditions under which the Minkowskian product of two $C2$-like Finsler manifolds remains $C2$-like.

\section{Preliminaries}
\begin{defn}
	\cite{SSZ} \textnormal{A Finsler metric on $M$ is a function $F:TM \to [0,\infty)$ satisfying the following conditions:
		\begin{itemize}
			\item[(i)] $F$ is smooth on $TM_{0}$,
			\item[(ii)] $F$ is a positively 1-homogeneous on the fibers of tangent bundle $TM$,
			\item[(iii)]The Hessian of $\frac{F^2}{2}$ with elements $g_{ij}=\frac{1}{2}\frac{\partial ^2F^2}{\partial y^i \partial y^j}$ is positive definite on $TM_0$.
		\end{itemize}
		The pair $(M,F)$ is called a Finsler space and $g_{ij}$ is called the fundamental tensor.
	}
\end{defn}
\begin{defn}
	\textnormal{Let $\left( M,F \right) $ be a Finsler space. For a vector $y \in T_xM \setminus \left\lbrace  0\right\rbrace $, let }
	$$C_y(u,v,w) := \frac{1}{4} \frac{\partial ^3}{\partial s \partial t \partial r}[F^2(y+su+tv+rw)]_{s=t=r=0},$$
	\textnormal{where $u,v,w \in T_xM$. Each $C_y$ is a symmetric trilinear form on $T_xM$. We call the family $C:= \left\lbrace C_y : y \in T_xM \setminus \left\lbrace  0\right\rbrace\right\rbrace  $ the Cartan torsion. Define the components of Cartan torsion $C$ by}
	$$C_{ijk} = \frac{1}{4}[F^2]_{y^iy^jy^k} = \frac{1}{2}\frac{\partial}{\partial y^k}(g_{ij}),$$
	\textnormal{and the mean value of the Cartan torsion by    
		$$I_y(u) := g^{ij}(y)C_y(u,\partial_i,\partial_j) , u \in T_xM$$
		We call the family $I := \left\lbrace I_y |~ y \in T_xM \setminus \left\lbrace  0\right\rbrace \right\rbrace $ the mean Cartan torsion at $x \in M$, where $I_i = g^{jk}C_{ijk}$. }  
	\end{defn}
	\begin{defn}
		\textnormal{   For a Minkowskian space \( (V, F) \), define the norm of \( \mathbf{I} \) and \( \mathbf{C} \) as
			$$
			\| \mathbf{I} \| := \sup_{y,u \in V \setminus \{0\}} 
			\frac{F(y) | I_y(u) |}{\sqrt{g_y(u,u)}},
			$$
			$$
			\| \mathbf{C} \| := \sup_{y,u,v,w \in V \setminus \{0\}} 
			\frac{F(y) | C_y(u,v,w) |}{\sqrt{g_y(u,u) g_y(v,v) g_y(w,w)}}.
			$$}
		\end{defn}
		
		The Cartan torsion provides a measure of the deviation of a Finsler metric from being Riemannian.  
		In particular, when \( C = 0 \), the Finsler metric coincides with a Riemannian metric.  
		In general, computations involving the full Cartan torsion tensor can be quite intricate and technically demanding.  
		However, by imposing certain special forms or symmetries on the Cartan torsion, one can often uncover significant and elegant geometric properties of the underlying Finsler space.  
		To facilitate such analysis, M.~Matsumoto systematically investigated several classes of special Finsler spaces~\cite{MM}, including \( C_2 \)-like, \( C \)-reducible, and semi-\( C \)-reducible spaces, among others.
		\begin{defn}
			\textnormal{
				A Finsler metric \( F \) is said to be \emph{C2-like} if the components of its Cartan torsion \( C_{ijk} \) are expressed as
				$$C_{ijk} =  \frac{1}{I^2} I_i I_j I_k,$$
				Here, \( I_i \) is the mean Cartan torsion and $| I \|^2 := I_i I^i = g^{ij} I_i I_j.$ The quantity \( I^2 \) is defined as \( I^2 = I_i I^i =g^{ij}I_iI_j\).}
			\end{defn}
			It should be noted that every two-dimensional Finsler space is C2-like.
			\begin{lem}\cite{XCZH}
				\textnormal{For a Minkowskian space $(V, F)$,  the norm of \(I\) and \(C\) is defined by
					$$
					\|\mathbf{C}\| := \sup_{y, u, v, w \in V \setminus \{0\}} \frac{F(y) \left| C_y(u, v, w) \right|}{\sqrt{g_y(u, u) g_y(v, v) g_y(w, w)}}.
					$$
					$$
					\|\mathbf{I}\| := \sup_{y, u \in V \setminus \{0\}} \frac{F(y) \left| I_y(u) \right|}{\sqrt{g_y(u, u)}}, 
					$$
				}

			\end{lem}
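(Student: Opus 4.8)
The statement records the standard definitions of the Cartan and mean-Cartan torsion norms on a Minkowski space $(V,F)$, so what genuinely requires verification is that the two suprema are \emph{finite} and \emph{well defined}. The plan is to exploit the homogeneity of the defining ratios to reduce each supremum to that of a continuous function over a compact constraint set, where a maximum is automatically attained.

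First I would record the homogeneity degrees of the building blocks. Since $F^2$ is positively homogeneous of degree $2$ in $y$, its third $y$-derivative $C_{ijk}=\tfrac14[F^2]_{y^iy^jy^k}$ is homogeneous of degree $-1$ in $y$; hence $C_y(u,v,w)=C_{ijk}(y)u^iv^jw^k$ is homogeneous of degree $-1$ in $y$ and of degree $1$ in each of $u,v,w$. Because $g_{ij}$ is homogeneous of degree $0$ in $y$, each factor $\sqrt{g_y(\cdot,\cdot)}$ is homogeneous of degree $0$ in $y$ and of degree $1$ in its vector argument, while $F$ is homogeneous of degree $1$. Combining these, the ratio defining $\|\mathbf{C}\|$ is homogeneous of degree $0$ separately in $y,u,v,w$; likewise, using $I_i=g^{jk}C_{ijk}$ (degree $-1$ in $y$), the ratio defining $\|\mathbf{I}\|$ is homogeneous of degree $0$ in $y$ and in $u$.

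Next I would use this scale invariance to replace each supremum over $V\setminus\{0\}$ by a supremum over the indicatrix $\{y:F(y)=1\}$ together with $g_y$-unit vectors in $u,v,w$. In a finite-dimensional Minkowski space the indicatrix is a compact hypersurface (the boundary of the bounded, closed convex unit ball determined by the positive-definite $g$), and since $g_y$ varies continuously and stays uniformly positive definite on this compact set, the corresponding unit ellipsoids are uniformly bounded. Thus the joint constraint set, e.g. $\{(y,u,v,w):F(y)=1,\ g_y(u,u)=g_y(v,v)=g_y(w,w)=1\}$, is compact, and the integrand is continuous on $TM_0$ away from the zero section. A continuous function on a compact set attains its maximum, so both norms are finite and well defined.

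The only genuine obstacle is the behavior near the zero section, where $C_{ijk}$ and $I_i$ blow up: I must confirm that the degree-zero homogeneity really cancels these singularities, so that the reduced problem lives entirely on the compact indicatrix and the associated unit spheres, with no escape of mass as a vector degenerates toward $0$. Once this homogeneity bookkeeping is checked, compactness and continuity close the argument immediately.
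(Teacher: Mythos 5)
This ``lemma'' is not actually proved in the paper: it is a verbatim restatement of Definition~2.3, imported from the cited reference \cite{XCZH}, and the paper treats it purely as a definition with no argument attached. So there is nothing to compare your proof against; the only substantive content one could attach to the statement is exactly what you address, namely that the two suprema are well defined and finite. Your homogeneity bookkeeping is correct: $C_{ijk}$ is positively homogeneous of degree $-1$ in $y$ (third derivative of a degree-$2$ function), $g_{ij}$ of degree $0$, $F$ of degree $1$, and $I_i=g^{jk}C_{ijk}$ of degree $-1$, so both ratios are homogeneous of degree $0$ in $y$ and in each vector argument separately. This lets you restrict to the indicatrix $\{F=1\}$ and to $g_y$-unit vectors, and since $g_y$ is continuous and uniformly positive definite over the compact indicatrix of a finite-dimensional Minkowski space, the constraint set is compact and the continuous ratio attains its supremum. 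The ``obstacle'' you flag at the end --- blow-up of $C_{ijk}$ and $I_i$ near the origin --- is already disposed of by the degree-$0$ homogeneity you computed (the ratio is constant along rays, so nothing escapes as $y\to 0$), so your argument is in fact complete. In short: your proposal is a correct and standard justification of finiteness, going beyond the paper, which offers none.
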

			In local coordinates,
			\begin{equation}
			\| C \|^2 := C_{ijk} C^{ijk} = g^{ip} g^{jq} g^{kr} C_{ijk} C_{pqr}.   
			\end{equation}
			and
			\begin{equation}
			I_{\alpha}=g^{\beta\gamma}C_{\alpha\beta\gamma}.   
			\end{equation}
			
			Here \( g^{ij} \) denotes the inverse of the fundamental tensor \( g_{ij} \), and indices are raised using \( g^{ij} \).
			
			\begin{rem}\cite{ChSh}
				\textnormal{A two dimensional Finsler space is always C2-like as well as C-reducible where as, a three dimensional Finsler space is always semi C-reducible.}
			\end{rem}
			
			\begin{lem}\cite{XCZH}
				\textnormal{
					Let \( F = \alpha + \beta \) be a Randers norm on an \( n \)-dimensional vector space \( V \). Then
					\[
					\|\mathbf{I}\| = \frac{n+1}{\sqrt{2}} \sqrt{1 - \sqrt{1 - b^2}} < \frac{n+1}{\sqrt{2}}, 
					\]
					\[
					\|\mathbf{C}\| \leq \frac{3}{\sqrt{2}} \sqrt{1 - \sqrt{1 - b^2}} < \frac{3}{\sqrt{2}}, 
					\]
					\textit{where \( b := \|\beta\|_\alpha \).
					}
				}
			\end{lem}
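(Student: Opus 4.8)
The plan is to reduce both norms to a single pointwise quantity, the $g_y$-norm of the mean Cartan torsion, and then optimise over the direction of $y$. For the first norm, note that for fixed $y$ the map $u\mapsto I_y(u)=I_iu^i$ is linear, so by Cauchy--Schwarz with respect to the inner product $g_y$ the inner supremum is a dual norm,
$$\sup_{u \neq 0}\frac{|I_y(u)|}{\sqrt{g_y(u,u)}} = \sqrt{g^{ij}I_iI_j} =: \|I\|_y.$$
Hence $\|\mathbf I\| = \sup_{y\neq0} F(y)\,\|I\|_y$. Since $C_{ijk}=\tfrac12\partial_{y^k}g_{ij}$ makes $I_i$ homogeneous of degree $-1$ in $y$ while $F$ is $1$-homogeneous, the product $F(y)\|I\|_y$ is $0$-homogeneous, so the supremum is a genuine maximisation over directions only.

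First I would write $F=\alpha+\beta$ with $\alpha=\sqrt{a_{ij}y^iy^j}$, $\beta=b_iy^i$, $y_i=a_{ij}y^j$, and record the fundamental tensor in the standard Randers form
$$g_{ij}=\frac{F}{\alpha}\left(a_{ij}-\frac{y_iy_j}{\alpha^2}\right)+\left(\frac{y_i}{\alpha}+b_i\right)\left(\frac{y_j}{\alpha}+b_j\right).$$
Differentiating once gives $C_{ijk}$, and a direct computation shows that Randers metrics are \emph{C-reducible}: writing $\ell_i=F_{y^i}$ and $h_{ij}=g_{ij}-\ell_i\ell_j$ for the angular metric, one has
$$C_{ijk}=\frac{1}{n+1}\left(h_{ij}I_k+h_{jk}I_i+h_{ki}I_j\right).$$
Computing $I_i=g^{jk}C_{ijk}$ directly from the (explicitly invertible) metric $g^{ij}$, and then contracting with $g^{ij}$, yields a closed expression for $\|I\|_y^2$ as a function of $\alpha$, $\beta$ and $b^2=a^{ij}b_ib_j$.

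The decisive step is the optimisation of $F^2\|I\|_y^2$ over the direction of $y$. By the $\mathrm{O}(n-1)$-symmetry about the $a$-dual vector $b^{\sharp}$ of $\beta$, this quantity depends only on $b^2$ and on the single angular parameter $s=\beta/\alpha\in(-b,b)$; I would insert this parametrisation, differentiate in $s$, and solve the resulting critical-point equation (checking the endpoints). I expect this to be the main obstacle: the critical value is exactly what produces the factor $\sqrt{1-\sqrt{1-b^2}}$, giving
$$\|\mathbf I\|=\frac{n+1}{\sqrt2}\sqrt{1-\sqrt{1-b^2}},$$
and the strict bound $<\tfrac{n+1}{\sqrt2}$ follows from the strong-convexity condition $b<1$.

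Finally, for $\|\mathbf C\|$ I would avoid a second optimisation by using C-reducibility. Since $h_{ij}$ is positive semidefinite with $h_{ij}u^iu^j=g_{ij}u^iu^j-(\ell_iu^i)^2\le g_{ij}u^iu^j$, Cauchy--Schwarz gives both $|h_{ij}u^iv^j|\le\sqrt{g_y(u,u)g_y(v,v)}$ and $|I_y(w)|\le\|I\|_y\sqrt{g_y(w,w)}$. Applying these estimates to each of the three terms of the C-reducible expression gives
$$|C_y(u,v,w)|\le\frac{3}{n+1}\,\|I\|_y\,\sqrt{g_y(u,u)g_y(v,v)g_y(w,w)},$$
whence $\|\mathbf C\|\le\frac{3}{n+1}\sup_{y}F(y)\|I\|_y=\frac{3}{n+1}\|\mathbf I\|=\frac{3}{\sqrt2}\sqrt{1-\sqrt{1-b^2}}<\frac{3}{\sqrt2}$. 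The result for $\|\mathbf C\|$ is an inequality rather than an equality precisely because the three Cauchy--Schwarz bounds cannot in general be saturated simultaneously by one choice of $u,v,w$.
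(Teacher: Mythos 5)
The paper does not prove this lemma at all: it is imported verbatim, with citation, from Cheng--Shen's book \cite{XCZH}, so there is no in-paper argument to compare against. Your outline is essentially the standard proof from that source and is sound: the dual-norm reduction $\sup_{u\neq 0}|I_y(u)|/\sqrt{g_y(u,u)}=\sqrt{g^{ij}I_iI_j}$ is correct, C-reducibility of Randers norms is Matsumoto's theorem, and the estimate $|h_{ij}u^iv^j|\le\sqrt{g_y(u,u)\,g_y(v,v)}$ (valid because $0\le h\le g$ as quadratic forms) legitimately converts the C-reducible expression into $\|\mathbf{C}\|\le\frac{3}{n+1}\|\mathbf{I}\|$. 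The only place where you defer the real work is the closed form for $F^2\|I\|_y^2$; for the record it comes out as $\frac{(n+1)^2}{4}\cdot\frac{b^2-s^2}{1+s}$ with $s=\beta/\alpha$ (most quickly obtained from $I_i=\frac{n+1}{2}\partial_{y^i}\ln\frac{F}{\alpha}=\frac{n+1}{2F}\bigl(b_i-s\,\tfrac{y_i}{\alpha}\bigr)$, using $\det(g_{ij})=(F/\alpha)^{n+1}\det(a_{ij})$), and its maximum over $s\in(-b,b)$ is attained at $s_0=\sqrt{1-b^2}-1$ with value $\frac{(n+1)^2}{2}\bigl(1-\sqrt{1-b^2}\bigr)$, which confirms the stated equality for $\|\mathbf{I}\|$ and, with $b<1$, the strict upper bounds.
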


			Let $(\overline{M}, \overline{F})$ and $(\widetilde{M}, \widetilde{F})$ be two Finsler manifolds with dimension $m$ and $n$, respectively, then $M=\overline{M} \times \widetilde{M} $ is a product manifold with dimensions $m+n$. and $(x^1,...,x^m)$ and $(x^{m+1},...,x^{m+n})$ be the local coordinates of $\overline{M}$ and $\widetilde{M}$, respectively, then the local coordinates on $M$ are $(x^1,...,x^{m+n})$.
			
			We assume $(x^1,...,x^m,y^1,...,y^m)$ and $(x^{m+1},...,x^{m+n},y^{m+1},...,y^{m+n})$ be the induced local coordinates on the tangent bundle $T\overline{M}$ and $T\widetilde{M}$, respectively, then the induced local coordinates on the tangent bundle $TM$ are $(x^1,...,x^m,x^{m+1},...,x^{m+n},y^1,...,y^m,y^{m+1},...,y^{m+n})$. Denote $\overline{M}' = T\overline{M} \setminus\{0\}$, $\widetilde{M}'= T\widetilde{M} \setminus \{0\}$, $M'= \overline{M}' \times \widetilde{M}' \subset T(\overline{M} \times \widetilde{M}) \setminus \{0\}$. \\
			The following, lowercase Latin indices such as $i,j,k$, etc.,will run from $1$ to $m+n$, lowercase Latin indices such as $a,b,c$, etc.,will run from $1$ to $m$, whereas lowercase Greek indices such as $\alpha,\beta,\gamma$, etc.,will run from $m+1$ to $m+n$, and the Einstein summation convention is assumed throughout this paper.\\
			Let $f:[0,\infty ) \times [0,\infty ) \to [0,\infty )$ be a continuous function such that
			\begin{itemize}
				\item[(a)] $f(s,t)=0$ if and only if $(s,t) = (0,0)$; 
				\item[(b)] $f(\lambda s,\lambda t)=\lambda f(s,t)$ for any $\lambda \in [0,\infty )$;
				\item[(c)] $f$ is smooth on $ (0,\infty ) \times (0,\infty )$;
				\item[(d)] $\frac{\partial f}{\partial s} \neq 0$, $\frac{\partial f}{\partial t} \neq 0$ for any $ (s,t)\in (0,\infty ) \times (0,\infty )$;
				\item[(e)] $\frac{\partial f}{\partial s} \frac{\partial f}{\partial t}- 2f\frac{\partial ^2 f}{\partial s \partial t } \neq 0$ for any $  (s,t)\in (0,\infty ) \times (0,\infty )$.
			\end{itemize}
			\begin{example}
				\textnormal{Let$f:[0,\infty ) \times [0,\infty ) \to [0,\infty )$ such that $f(s,t)=as+bt$, where $a,b$ are positive real numbers. Then $f$ satisfies the above five properties.In this case, the product is called Euclidean Minkowskian product.}  
			\end{example}
			\begin{example}
				\textnormal{Let \( F_1 \) and \( F_2 \) be two Finsler metrics on a manifold \( M \). The $L^p$-Minkowskian product of \( F_1 \) and \( F_2 \) is defined as:
					$$F(x, y) = \left( F_1(x, y)^p + F_2(x, y)^p \right)^{1/p}, \quad p > 1.$$
					Previous example was the particular case for $p=2$}   
				\end{example}
				\begin{defn}\cite{OKD}
					\textnormal{Let $(\overline{M}, \overline{F})$ and $(\widetilde{M}, \widetilde{F})$ be two Finsler manifolds and $f$ be continuous function satisfying the conditions $\mathbf{(a)-(e)}$. Denote $K= \overline{F}^2$, $H=\widetilde{F}^2$, the Minkowskian product Finsler manifold $(M, F)$ of $(\overline{M},  \overline{F})$ and $(\widetilde{M}, \widetilde{F})$ with respect to the product function $f$ is the product manifold $M=\overline{M} \times \widetilde{M}$ endowed with the Finsler metric $F: M' \to \mathbb{R}^+$ given by
						\begin{equation}
						F (x, y) = \sqrt{f (K (x^i, y^i),H (x^{i'}, y^{i'}))}, 
						\end{equation}
						where $(x, y) \in M'$, $(x^i, y^{i}) \in \overline{M}'$, $(x^{i'}, y^{i'}) \in \widetilde{M}'$ with $x =(x^i, x^{i'})$, $y=(y^i, y^{i'})$}.    
					\end{defn}  
					In \cite{OKD} Okada showed that $F$  defined  above is a Finsler metric on $M$.
					
					\begin{proposition}\cite{WZ}
						\textnormal{
							If $f(K, H)$ is a homogeneous function with respect to $K$ and $H$, then
							\begin{equation}
							f_{K}K + f_{H}H = f,~ f_{KK}K + f_{KH}H = 0,~ f_{HK}K + f_{HH}H = 0,~ f^2_{KH} = f_{KK}f_{HH}.
							\end{equation}
						}
					\end{proposition}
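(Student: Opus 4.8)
The plan is to derive all four identities from Euler's theorem on homogeneous functions, exploiting the fact that condition (b) makes $f$ positively $1$-homogeneous and condition (c) guarantees smoothness on $(0,\infty) \times (0,\infty)$, which legitimizes differentiating the homogeneity relation and invoking the symmetry of mixed partials.

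First I would record the homogeneity identity $f(\lambda K, \lambda H) = \lambda f(K, H)$, differentiate both sides with respect to $\lambda$, and then set $\lambda = 1$. This yields $K f_K + H f_H = f$, the first relation, which is exactly Euler's identity for a function homogeneous of degree one.

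Next I would observe that the first-order partials $f_K$ and $f_H$ are themselves homogeneous of degree zero. This follows by differentiating the identity $f(\lambda K, \lambda H) = \lambda f(K, H)$ with respect to $K$ (respectively $H$): the chain rule gives $\lambda f_K(\lambda K, \lambda H) = \lambda f_K(K, H)$, hence $f_K(\lambda K, \lambda H) = f_K(K, H)$, and similarly for $f_H$. Applying Euler's identity for degree-zero functions to $g = f_K$ and to $g = f_H$ then produces $K f_{KK} + H f_{KH} = 0$ and $K f_{HK} + H f_{HH} = 0$, which are the second and third relations. Alternatively, these two follow at once by differentiating the first relation directly with respect to $K$ and $H$.

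Finally, to obtain $f_{KH}^2 = f_{KK} f_{HH}$, I would solve the second and third relations for $f_{KK} = -\tfrac{H}{K} f_{KH}$ and $f_{HH} = -\tfrac{K}{H} f_{HK}$ and multiply them. Using Schwarz's theorem $f_{KH} = f_{HK}$, valid by the smoothness in condition (c), the factors $\tfrac{H}{K}$ and $\tfrac{K}{H}$ cancel and the product collapses to $f_{KH}^2$. Since every step is an application of Euler's theorem together with elementary algebra, I do not expect a serious obstacle; the only points requiring care are confirming that differentiation under the homogeneity relation is justified on the open quadrant $(0,\infty) \times (0,\infty)$ and that the mixed partials may be interchanged, both of which are secured by the smoothness hypothesis (c).
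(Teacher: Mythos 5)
Your proof is correct. The paper states this proposition as a citation to [WZ] and gives no proof of its own, so there is nothing internal to compare against; your derivation --- Euler's identity for the degree-one function $f$, Euler's identity for the degree-zero partials $f_K$ and $f_H$ (equivalently, differentiating the first relation), and then eliminating $f_{KK}$ and $f_{HH}$ from the second and third relations and invoking Schwarz symmetry to get $f_{KH}^2 = f_{KK}f_{HH}$ --- is the standard, complete argument, and your attention to the smoothness hypothesis on the open quadrant (needed both to differentiate the homogeneity relation and to divide by $K$ and $H$) is exactly the right care to take.
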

					In this paper we use the following notations:
					\begin{equation*}
						K_i=\frac{\partial K}{\partial y^i}, ~ K_{;i}=\frac{\partial K}{\partial x^i}, ~ K_{i;j}=\frac{ \partial ^2K}{ \partial y^i\partial x^j},   
					\end{equation*}
					\begin{equation*}
						H_{i'}=\frac{\partial H}{\partial y^{i'}}, ~ H_{;{i'}}=\frac{\partial H}{\partial x^{i'}}, ~ H_{i';j'}=\frac{ \partial ^2H}{ \partial y^{i'}\partial x^{j'}}.  
					\end{equation*}
					Since $K$ and $H$ are homegenous functions of degree $2$ with respect to $y$ we get the following results:
					\begin{equation}\label{4.05}
					K_{ij}y^j= K_i, ~ K_iy^i= 2K, ~ H_{i'j'}y^{j'}=H_{i'}, ~ H_{i'}y^{i'}= 2H.
					\end{equation}
					
					\begin{proposition}\cite{HLBN}
						\textnormal{
							Let $(M,F)$ be a Minkowskian product of the Finsler manifolds $(\overline{M},\overline{F})$ and $(\widetilde{M},\widetilde{F})$. Then the fundamental tensor matrix of $F$ is given by 
							\begin{equation}\label{4.1}
							g_{\alpha\beta}= \frac{1}{2}\left( \frac{\partial^2 F^2}{\partial y^{\alpha} \partial y^{\beta}}\right)= \begin{pmatrix} G_{ij} & G_{ij'} \\ G_{i'j} & G_{i'j'}, \end{pmatrix} 
							\end{equation}
							where
							\begin{equation}\label{4.2}
							\begin{split}
							G_{ij}= f_KK_{ij}+f_{KK}K_iK_j, \qquad G_{ij'}= f_{KH}K_iH_{j'},\\ G_{i'j}= f_{KH}H_{i'}K_j, \qquad  G_{i'j'}=f_HH_{i'j'}+f_{HH}H_{i'}H_{j'}.   
							\end{split}    
							\end{equation}
						}
					\end{proposition}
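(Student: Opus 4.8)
The plan is to establish \eqref{4.1}--\eqref{4.2} by a direct computation of the Hessian $g_{\alpha\beta}=\tfrac12\,\partial^2F^2/\partial y^{\alpha}\partial y^{\beta}$ of $F^2=f(K,H)$, organizing the result according to whether each differentiation index belongs to the first factor ($1,\dots,m$) or the second ($m+1,\dots,m+n$). The structural fact that drives everything is that $K=\overline{F}^2$ is a function of $(x^i,y^i)$ only and $H=\widetilde{F}^2$ a function of $(x^{i'},y^{i'})$ only; consequently $\partial K/\partial y^{i'}=0$ and $\partial H/\partial y^{i}=0$. This disjointness of the two sets of fibre coordinates is exactly what produces the block structure and confines all mixing of the two metrics to the single coefficient $f_{KH}$.

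First I would record the first-order derivatives. Applying the chain rule to $F^2=f(K,H)$ and discarding the terms killed by the vanishing cross-dependence gives $\partial F^2/\partial y^{i}=f_K K_i$ and $\partial F^2/\partial y^{i'}=f_H H_{i'}$, with $K_i$ and $H_{i'}$ as fixed in the notation above. Differentiating a second time is then routine, the only care being to track the argument on which each factor depends. For the $(i,j)$ block I differentiate $f_K K_i$ in $y^{j}$; since $\partial f_K/\partial y^{j}=f_{KK}K_j$ (the $f_{KH}$ contribution drops because $\partial H/\partial y^{j}=0$), I obtain $\partial^2 F^2/\partial y^{i}\partial y^{j}=f_{KK}K_iK_j+f_K K_{ij}$, and symmetrically $f_{HH}H_{i'}H_{j'}+f_H H_{i'j'}$ for the $(i',j')$ block. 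For a mixed pair $(i,j')$ I differentiate $f_K K_i$ in $y^{j'}$: now $\partial K_i/\partial y^{j'}=0$ while $\partial f_K/\partial y^{j'}=f_{KH}H_{j'}$, leaving only $f_{KH}K_iH_{j'}$; the $(i',j)$ block is its transpose. Assembling the four blocks into the matrix yields the asserted form.

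I expect no genuinely hard step: the entire content is the chain rule together with the disjointness of the two fibre-coordinate systems, so there is no deep obstacle to overcome. The only points demanding attention are bookkeeping---consistently dropping the terms annihilated by $\partial K/\partial y^{i'}=\partial H/\partial y^{i}=0$---and aligning the normalization so that the $\tfrac12$ in the definition of $g_{\alpha\beta}$ is distributed as in the stated coefficients $G_{\alpha\beta}$. Should one wish to go beyond the form and verify that \eqref{4.2} genuinely defines a Finsler fundamental tensor, the homogeneity relations \eqref{4.05} for $K,H$ together with the homogeneity identities for $f$ would be the natural tools, but they are not needed merely to establish the expressions claimed here.
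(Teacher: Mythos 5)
Your proposal is correct: the paper cites this proposition from \cite{HLBN} without reproducing a proof, and the direct chain-rule computation of the Hessian of $f(K,H)$, exploiting $\partial K/\partial y^{i'}=\partial H/\partial y^{i}=0$ to produce the block structure, is exactly the standard argument behind it. The only caveat, which you already flag, is normalization: with $K_{ij}=\partial^2K/\partial y^i\partial y^j$ (as forced by \eqref{4.05}) the Hessian gives $\tfrac12\left(f_KK_{ij}+f_{KK}K_iK_j\right)$, so the stated $G_{ij}$ carries an extra factor of $2$ relative to $\tfrac12\,\partial^2F^2/\partial y^{\alpha}\partial y^{\beta}$ --- a convention inconsistency inherited from the source rather than a gap in your argument.
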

					\begin{proposition}\cite{HLBN}
						\textnormal{
							Let $(M,F)$ be a Minkowskian product of the Finsler manifolds $(\overline{M},\overline{F})$ and $(\widetilde{M},\widetilde{F})$. Then the inverse matrix of the fundamental tensor matrix of $F$ is given by 
							\begin{equation}\label{4.3}
							g^{\alpha\beta}= \begin{pmatrix} G^{ji} & G^{ji'} \\ G^{j'i} & G^{j'i'} \end{pmatrix} 
							\end{equation}
							where
							\begin{equation}\label{4.4}
							\begin{split}
							G^{ji}=\frac{1}{f_K}\left( K^{ji}- \frac{f_Hf_{KK}}{\Delta}y^jy^i\right), \qquad G^{ji'}= -\frac{1}{\Delta}f_{KH}y^jy^{i'},\\ G^{j' i}= -\frac{1}{\Delta}f_{KH}y^{j'}y^i, \qquad  G^{i'j'}= \frac{1}{f_H}\left( H^{j'i'}- \frac{f_Kf_{HH}}{\Delta}y^{j'}y^{i'}\right).   
							\end{split}    
							\end{equation}
							and $\Delta=f_Kf_H-2ff_{KH}$
						}
					\end{proposition}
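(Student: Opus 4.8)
The plan is to verify directly that the matrix displayed in \eqref{4.3}--\eqref{4.4} is a two-sided inverse of the fundamental tensor $g_{\alpha\beta}$ of \eqref{4.1}--\eqref{4.2}; that is, to check the block identity $g_{\alpha\gamma}g^{\gamma\beta}=\delta_\alpha^\beta$. Because the summed index $\gamma$ splits into an unprimed range (the $\overline{M}$-directions) and a primed range (the $\widetilde{M}$-directions), this reduces to four separate verifications: the two diagonal blocks must collapse to the $m\times m$ and $n\times n$ identities, while the two off-diagonal blocks must vanish. The ingredients are the Euler-type relations \eqref{4.05}, namely $K_{ij}y^j=K_i$, $K_iy^i=2K$, $H_{i'j'}y^{j'}=H_{i'}$, $H_{i'}y^{i'}=2H$; the factor-metric inverse relations $K_{ik}K^{kj}=\delta_i^j$ and $H_{i'k'}H^{k'j'}=\delta_{i'}^{j'}$, which also give $K_kK^{kj}=y^j$ and $H_{k'}H^{k'j'}=y^{j'}$; the homogeneity identities $f_KK+f_HH=f$, $f_{KK}K+f_{KH}H=0$, $f_{KH}K+f_{HH}H=0$ of \cite{WZ}; and the nonvanishing of $\Delta=f_Kf_H-2ff_{KH}$ and of $f_K,f_H$ guaranteed by conditions (d)--(e), which make \eqref{4.4} well defined.

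For the upper-left block I would expand
$$G_{ik}G^{kj}+G_{ik'}G^{k'j}=(f_KK_{ik}+f_{KK}K_iK_k)\,\tfrac{1}{f_K}\big(K^{kj}-\tfrac{f_Hf_{KK}}{\Delta}y^ky^j\big)+f_{KH}K_iH_{k'}\big(-\tfrac{1}{\Delta}f_{KH}y^{k'}y^j\big).$$
Contracting with $y^k$ and $y^{k'}$ turns $K_{ik}y^k\mapsto K_i$, $K_ky^k\mapsto 2K$, $H_{k'}y^{k'}\mapsto 2H$, while $K_{ik}K^{kj}=\delta_i^j$ and $K_kK^{kj}=y^j$ produce the identity piece, so the whole expression reduces to $\delta_i^j$ plus a single rank-one term proportional to $K_iy^j$. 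After clearing denominators the coefficient of that rank-one term is $f\,f_{KH}f_{KK}+K\,f_Hf_{KK}^2+H\,f_Kf_{KH}^2$; substituting $Kf_{KK}=-Hf_{KH}$ and $f_{KH}H=-Kf_{KK}$ factors it as $f_{KH}f_{KK}\,(f-Kf_K-Hf_H)$, which is zero by Euler's relation. The lower-right block is identical after the exchange $K\leftrightarrow H$ together with unprimed $\leftrightarrow$ primed indices.

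For the upper-right block the same expansion, now using $H_{k'}H^{k'j'}=y^{j'}$, collapses to a single rank-one term proportional to $K_iy^{j'}$ whose coefficient (after clearing denominators) is $-2Kf_Hf_{KK}-2ff_{KH}-2Hf_Kf_{HH}$; substituting $Kf_{KK}=-Hf_{KH}$ and $Hf_{HH}=-Kf_{KH}$ rewrites this as $2f_{KH}\,(Kf_K+Hf_H-f)=0$, and the lower-left block follows by the same $K\leftrightarrow H$ symmetry. Hence all four blocks check out and \eqref{4.3}--\eqref{4.4} is the inverse. I expect the main obstacle to be the bookkeeping of these rank-one correction terms: the cancellations are invisible term by term and succeed only because, after repeated use of the three homogeneity relations, every leftover reassembles into the single Euler combination $f-Kf_K-Hf_H$, which then vanishes.
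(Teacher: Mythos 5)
Your verification is correct. I checked both representative blocks: for the upper-left block the expansion of $G_{ik}G^{kj}+G_{ik'}G^{k'j}$ does reduce to $\delta_i^j$ plus a rank-one term in $K_iy^j$ whose coefficient, after clearing the denominator $f_K\Delta$ and substituting $\Delta=f_Kf_H-2ff_{KH}$, is $-2\bigl(ff_{KK}f_{KH}+Kf_Hf_{KK}^2+Hf_Kf_{KH}^2\bigr)$, and the two substitutions $Kf_{KK}=-Hf_{KH}$ and $Hf_{HH}=-Kf_{KH}$ factor it as $-2f_{KK}f_{KH}(f-Kf_K-Hf_H)=0$; for the upper-right block the leftover coefficient of $K_iy^{j'}$ likewise collapses to $-2f_{KH}(f-Kf_K-Hf_H)=0$. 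All the auxiliary identities you invoke ($K_kK^{kj}=y^j$, $H_{k'}y^{k'}=2H$, invertibility of $K_{ij}$ from positive definiteness, $\Delta\neq0$ and $f_K,f_H\neq0$ from conditions (d)--(e)) are available in the paper's setup. Note, however, that the paper itself offers no proof to compare against: the proposition is imported verbatim from the reference [HLBN] (He--Li--Bian--Zhang) as a known result, so your direct block-multiplication check $g_{\alpha\gamma}g^{\gamma\beta}=\delta_\alpha^\beta$ is not an alternative to an argument in this paper but a self-contained substitute for the citation. That is the natural (and essentially the only) way to verify the claim, and it has the side benefit of making explicit that every cancellation ultimately rests on the Euler relation $f_KK+f_HH=f$ together with the two homogeneity identities for the second derivatives --- which is worth knowing, since the same three identities drive the later torsion computations in Sections 3 and 4.
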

					\section{Cartan torsion of Minkowskian product of Finsler manifolds}
					In this section, we study the Cartan torsion of Minkowskian product of Finsler manifolds $(\bar{M},\bar{F})$ and $\tilde{M}, \tilde{F}$ and denote it by $(M,F)$. We consider $f$ is the corresponding Minkowskian function. Also we assume $K=\bar{F}^2$ and $H=\tilde{F}^2$.
					
					Let $C_{\alpha\beta\gamma}$, $\bar{C}_{ijl}$, $\tilde{C}_{i'j'l'}$ be the Cartan torsion of $M$, $\bar{M}$ and $\tilde{M}$ respectively. Then
					\begin{equation}
					\bar{C}_{ijl}=\frac{1}{2}K_{ijl}, ~~ \tilde{C}_{i'j'l'}= \frac{1}{2}H_{i'j'l'}
					\end{equation}
					
					Differentiating the first equation of \eqref{4.2} with respect $y^i$ we obtain the following:
					\begin{align}
						2C_{ijl}  \nonumber
						&= \frac{\partial G_{ij}}{\partial y^l}  \\ \nonumber
						&= f_{KK} K_l K_{ij} + f_K K_{ijl} + f_{KKK} K_i K_j K_l + f_{KK} K_{il} K_j+ f_{KK} K_i K_{jl} \\ 
						&= f_K K_{ijl} + f_{KK} \bigl( K_l K_{ij} + K_{il} K_j + K_i K_{jl} \bigr) + f_{KKK} K_i K_j K_l
					\end{align}
					Similarly, differentiating the different terms of \eqref{4.2} with respect to the coefficients of $y$ we obtain the following proposition:
					\begin{proposition}\label{p3.1}
						\textnormal{
							The Cartan torsion of Minkowskian product Finsler manifold is given by for $\alpha,\beta,\gamma\in \{1,2,...,n_1\}$
							\begin{equation}\label{4.5}
							C_{\alpha\beta\gamma}=C_{ijl}=\frac{1}{2}\left[ f_{K}K_{ijl}+f_{KK}(K_iK_{jl}+K_jK_{il}+K_lK_{ij})+f_{KKK}K_iK_jK_l\right] 
							\end{equation}
							for $\alpha,\beta\in \{1,2,...,n_1\}$ and for $\gamma\in \{n_1+1,...,n_1+n_2\}$
							\begin{equation}\label{4.6}
							C_{\alpha\beta\gamma}=C_{ijl'}=\frac{1}{2}\left[ f_{KHK}K_iK_jH_{l'}+f_{KH}K_{ij}H_{l'}\right] ; 
							\end{equation}
							for $\gamma,\alpha\in \{1,2,...,n_1\}$ and for $\beta\in \{n_1+1,...,n_1+n_2\}$
							\begin{equation}\label{4.006}
							C_{\alpha\beta\gamma}= C_{ij'l}=\frac{1}{2}\left[ f_{KHK}K_iH_{j'}K_l+f_{KH}K_{il}H_{j'}\right] ;
							\end{equation}
							for $\gamma,\alpha\in \{1,2,...,n_1\}$ and for $\beta\in \{n_1+1,...,n_1+n_2\}$
							\begin{equation}\label{4.0007}
							C_{\alpha\beta\gamma}= C_{i'jl}=\frac{1}{2}\left[ f_{KHK}H_{i'}K_{j}K_l+f_{KH}K_{jl}H_{i'}\right] ;
							\end{equation}
							for $\alpha \in \{1,2,...,n_1\}$ and for $\beta, \gamma\in \{n_1+1,...,n_1+n_2\}$
							\begin{equation}\label{4.7}
							C_{\alpha\beta\gamma}=C_{ij'l'}=\frac{1}{2}\left[ f_{KHH}K_iH_{j'}H_{l'}+f_{KH}K_{i}H_{j'l'}\right] ;
							\end{equation}
							for $\gamma \in \{1,2,...,n_1\}$ and for $\alpha,\beta \in \{n_1+1,...,n_1+n_2\}$
							\begin{equation}\label{4.007}
							C_{\alpha\beta\gamma}=C_{i'j'l}=\frac{1}{2}\left[ f_{KHK}H_{i'}H_{j'}K_l+f_{KH}H_{i'j'}K_{l}\right] 
							\end{equation}
							for $\beta  \in \{1,2,...,n_1\}$ and for $\alpha, \gamma \in \{n_1+1,...,n_1+n_2\}$
							\begin{equation}\label{4.8}
							C_{\alpha\beta\gamma}=C_{i'jl'}=\frac{1}{2}\left[ f_{KHH}K_jH_{i'}H_{l'}+f_{KH}K_{l}H_{i'l'}\right]; 
							\end{equation}
							for $\gamma  \in \{1,2,...,n_1\}$ and for $\alpha, \beta   \in \{n_1+1,...,n_1+n_2\}$
							\begin{equation}\label{4.008}
							C_{\alpha\beta\gamma}= C_{i'j'l}=\frac{1}{2}\left[ f_{HK}H_{i'j'}K_l+f_{KHH}H_{i'}H_{j'}K_{l}\right] 
							\end{equation}
							for $\alpha,\beta,\gamma\in \{n_1+1,...,n_1+n_2\}$
							\begin{equation}\label{4.9}
							C_{\alpha\beta\gamma}=C_{i'j'l'}=\frac{1}{2}\left[ f_{H}H_{i'j'l'}+f_{HH}(H_{i'}H_{j'l'}+H_{j'l'}H_{i'l'}+H_{l'}H_{i'j'})+f_{HHH}H_{i'}H_{j'}H_{l'}\right] 
							\end{equation}	
						}
					\end{proposition}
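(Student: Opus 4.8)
The plan is to start from the closed form of the fundamental tensor recorded in~\eqref{4.1}--\eqref{4.2} and apply the definition $C_{\alpha\beta\gamma}=\tfrac12\,\partial g_{\alpha\beta}/\partial y^{\gamma}$ directly, differentiating each of the four blocks $G_{ij}$, $G_{ij'}$, $G_{i'j}$, $G_{i'j'}$ once more with respect to a coordinate $y^{\gamma}$. The whole computation rests on two structural facts. First, $K=\overline{F}^2$ depends only on the first-factor variables $(x^i,y^i)$ and $H=\widetilde{F}^2$ only on the second-factor variables $(x^{i'},y^{i'})$, so every mixed $y$-derivative vanishes: $\partial K/\partial y^{l'}=0$ and $\partial H/\partial y^{l}=0$, while $\partial K_i/\partial y^{l}=K_{il}$, $\partial K_{ij}/\partial y^{l}=K_{ijl}$, and analogously for $H$. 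Second, since $f$ enters only through $K$ and $H$, every derivative of $f$ (and of $f_K$, $f_{KK}$, $f_{KH}$, $f_{HH}$, $\dots$) is handled by the chain rule: differentiating with respect to a first-factor index $y^{l}$ produces a factor $K_l$ and raises the $K$-order of the $f$-derivative by one, whereas differentiating with respect to a second-factor index $y^{l'}$ produces a factor $H_{l'}$ and raises the $H$-order by one.

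With these two rules the proof reduces to a finite case analysis according to how many of the three indices $\alpha,\beta,\gamma$ are first-factor and how many are second-factor. For the all-first-factor component $C_{ijl}$ I differentiate $G_{ij}=f_K K_{ij}+f_{KK}K_iK_j$ with respect to $y^{l}$; this is exactly the computation displayed just before the statement and yields~\eqref{4.5}. The all-second-factor component $C_{i'j'l'}$ is obtained symmetrically from $G_{i'j'}$ and gives~\eqref{4.9}. For a component with two first-factor and one second-factor index I differentiate $G_{ij}$ with respect to $y^{l'}$: the $K$-factors are annihilated and only the chain-rule terms on $f_K$ and $f_{KK}$ survive, producing~\eqref{4.6}; the remaining orderings~\eqref{4.006} and~\eqref{4.0007} come from differentiating the off-diagonal block $G_{ij'}=f_{KH}K_iH_{j'}$ (or $G_{i'j}$) with respect to a first-factor index $y^{l}$. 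Symmetrically, a component with one first-factor and two second-factor indices is obtained either by differentiating $G_{i'j'}$ with respect to $y^{l}$ or by differentiating $G_{ij'}$ with respect to $y^{l'}$, giving~\eqref{4.7}, \eqref{4.007}, \eqref{4.8} and~\eqref{4.008}.

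Because the Cartan torsion is totally symmetric in its three indices, being a third partial derivative of $F^2=f$, the several mixed components are merely permutations of just two patterns (two first-factor indices and one second-factor, or one first-factor and two second-factor), so only these two computations are genuinely new; the other expressions follow by relabelling, and their mutual consistency, together with the symmetry manifest in each displayed formula and the commutativity of mixed partials such as $f_{KHK}=f_{KKH}$, serves as a built-in check. As a further sanity test I would specialise to the Euclidean case $f=aK+bH$, where $f_{KK}=f_{KH}=f_{HH}=0$: every mixed component then vanishes and the torsion collapses to $\tfrac12 f_K K_{ijl}=a\,\overline{C}_{ijl}$ and $\tfrac12 f_H H_{i'j'l'}=b\,\widetilde{C}_{i'j'l'}$, foreshadowing the splitting characterisation of Theorem~\ref{T3.1}.

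The obstacle here is not conceptual but purely organisational: for each coefficient $f_K,f_{KK},f_{KKK},f_{KH},f_{KHH},f_{HH},f_{HHH}$ one must track exactly which higher derivative of $f$ and which $K_l$ or $H_{l'}$ factor each differentiation generates, and must use the symmetry of mixed partials to bring every term into the canonical form listed. A secondary point of care is placing the lowered indices on $K_{ij},K_i,H_{i'j'},H_{i'}$ correctly across all the cases so that each term lands in the intended block of the $(m+n)\times(m+n)$ tensor.
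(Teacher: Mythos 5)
Your proposal is correct and follows essentially the same route as the paper: the author likewise differentiates the blocks $G_{ij}$, $G_{ij'}$, $G_{i'j}$, $G_{i'j'}$ of the fundamental tensor from \eqref{4.2} with respect to the various $y$-coordinates, carries out the all-first-factor case explicitly, and obtains the remaining components ``similarly'' via the chain rule and the separation of variables between $K$ and $H$. Your explicit statement of the two structural facts and the Euclidean sanity check are welcome additions, but the argument is the same.
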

					Before proving the next theorem we need the following lemma. \begin{lem}\label{lemma2}
						\textnormal{Let \( f(x, y) \) be a smooth function of two variables that is positively 1-homogeneous. If the mixed partial derivative \( f_{xy} \) vanishes identically, then \( f \) is a linear function. 
						}
					\end{lem}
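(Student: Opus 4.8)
The plan is to show that the hypotheses force both first-order partial derivatives of $f$ to be constant, after which linearity is immediate and the homogeneity pins down the (absent) constant term.

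First I would record two elementary consequences of the standing assumptions. Differentiating the homogeneity relation $f(\lambda x,\lambda y)=\lambda f(x,y)$ with respect to $x$ (for fixed $\lambda>0$) gives $f_x(\lambda x,\lambda y)=f_x(x,y)$; thus each first partial derivative of a positively $1$-homogeneous function is positively $0$-homogeneous. Separately, the hypothesis $f_{xy}\equiv 0$ means $\partial_y(f_x)=0$, so on the (rectangular, hence vertically connected) domain $(0,\infty)\times(0,\infty)$ the function $f_x$ does not depend on $y$; write $f_x=g(x)$.

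The key step is to combine these two facts. Since $g(x)=f_x(x,y)$ is $0$-homogeneous, we get $g(\lambda x)=f_x(\lambda x,\lambda y)=f_x(x,y)=g(x)$ for all $\lambda>0$; evaluating at $x=1$ shows $g\equiv g(1)$ is a constant, say $a$. By the symmetric argument (using $f_{yx}=f_{xy}=0$ and the $0$-homogeneity of $f_y$), $f_y$ is a constant $b$. Hence $\nabla f\equiv(a,b)$, and integrating on the connected domain yields $f(x,y)=ax+by+c$ for some constant $c$.

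Finally I would eliminate $c$ using homogeneity once more: substituting $f(x,y)=ax+by+c$ into $f(\lambda x,\lambda y)=\lambda f(x,y)$ forces $c=\lambda c$ for every $\lambda>0$, whence $c=0$ and $f(x,y)=ax+by$ is linear. The computation is entirely routine; the only point that requires a moment's care is the middle step, where the implication ``independent of $y$ and $0$-homogeneous $\Rightarrow$ constant'' relies on $f_x$ being globally a function of $x$ alone, which in turn uses that the domain is a product of intervals so that $f_{xy}\equiv 0$ genuinely separates off the variable $y$.
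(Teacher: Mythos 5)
Your proof is correct, and it takes a genuinely different route from the paper's. Both arguments begin the same way, using $f_{xy}\equiv 0$ on the product domain to conclude that $f_x$ depends only on $x$ and $f_y$ only on $y$. The paper then integrates to write $f(x,y)=G(x)+H(y)$, invokes Euler's relation $xf_x+yf_y=f$ to obtain the separated identity $xG'(x)-G(x)=yH'(y)-H(y)=c$, and solves these first-order ODEs, with a somewhat delicate final step arguing that the integration constants must vanish by homogeneity. You instead observe that the first partials of a positively $1$-homogeneous function are $0$-homogeneous, so $f_x=g(x)$ satisfies $g(\lambda x)=g(x)$ for all $\lambda>0$ and is therefore constant; likewise for $f_y$. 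This bypasses Euler's formula and the ODEs entirely, and it also sidesteps the paper's slightly imprecise handling of the additive constants (in the paper the general solutions are $G(x)=Ax-c$ and $H(y)=By+c$, whose constants in fact cancel in the sum rather than each being forced to zero). Your remark that the separation of variables relies on the domain being a product of intervals is a point the paper leaves implicit, and it is worth making. The one cosmetic caveat is that the conclusion $f(x,y)=ax+by$ is really an affine function with zero constant term; both you and the paper use "linear" in this sense, consistently with the statement of the lemma.
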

					\begin{proof}
						Since \( f_{xy} = 0 \), we have that \( f_x(x,y) \) is independent of \( y \), and \( f_y(x,y) \) is independent of \( x \). Let us assume $f_x=g(x)$ and $f_y=h(y)$. Then 
						\begin{equation*}
							df=f_xdx+f_ydy=g(x)dx+h(y)dy  
						\end{equation*}
						Integrating we obtain
						\[
						f(x,y) = G(x)+H(y)+C,
						\]
						where \(G(x)=\int g(x)dx \) and \(H(y)=\int h(y)dy \).\\
						Since $f$ is homogeneous $c=0$. Hence,
						\[
						f(x,y) = G(x)+H(y),
						\]
						Again, since \( f \) is 1-homogeneous, using Euler's homogeneous equation formula we have, $x f_x + y f_y = f.$ Substituting \( f(x,y) = G(x) + H(y) \), \( f_x = G'(x) \), and \( f_y = H'(y) \), and rewriting we get,
						\begin{equation}
						x G'(x) - G(x) = -H(y) + y H'(y).   
						\end{equation}
						
						Note that the left-hand side depends only on \( x \), while the right-hand side depends only on \( y \). Therefore, both sides must equal a constant, say \( c \). That is,
						\begin{equation}
						x G'(x) - G(x) = c, \quad y H'(y) - H(y) = -c.    
						\end{equation}
						These are first order linear differential equation. It is need to be noted that as $f$ is 1-degree homogeneous  constant terms must be $0$. Therefore, solving them we obtain $G(x) = A x$ and $H(y) = B y$, where $A$ and $B$ are constants.    Hence, $f$ is a linear function.
					\end{proof}
					
					\begin{theorem}\label{T3.1}
						\textnormal{ The Cartan torsion of Minkowskian product of Finsler manifolds splits, that is, 
							\begin{equation}
							C_{\alpha \beta \gamma} =
							\begin{cases} 
							a\bar{C}_{ijl} & \text{if } (\alpha, \beta, \gamma) = (i,j,l), \\[6pt] 
							b\tilde{C}{i'j'l'}   & \text{if } (\alpha, \beta, \gamma) = (i',j',l'), \\[6pt] 
							0 & \text{otherwise }
							\end{cases}
							\end{equation}
							if and only if $f$ is linear in $H$ and $K$ that is $f= aH+bK$, where $a$ and $b$ are real constants}
						\end{theorem}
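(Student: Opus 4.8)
The plan is to prove both implications by direct inspection of the component formulas in Proposition~\ref{p3.1}, reducing the nontrivial direction to the single condition $f_{KH}\equiv 0$ and then invoking Lemma~\ref{lemma2}.

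\emph{Sufficiency.} I would first suppose $f$ is linear, $f=aH+bK$ as in the statement. Then $f_K$ and $f_H$ are constants and every second- and higher-order partial derivative of $f$ vanishes, i.e. $f_{KK}=f_{KH}=f_{HH}=f_{KKK}=f_{KKH}=f_{KHH}=f_{HHH}=0$. Substituting this into the nine formulas of Proposition~\ref{p3.1}, each mixed component carries a factor $f_{KH}$, $f_{KHK}$, or $f_{KHH}$ (see \eqref{4.6}--\eqref{4.8}) and so vanishes identically, while the two pure components collapse to $C_{ijl}=\tfrac12 f_K K_{ijl}$ and $C_{i'j'l'}=\tfrac12 f_H H_{i'j'l'}$. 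Since $\bar C_{ijl}=\tfrac12 K_{ijl}$ and $\tilde C_{i'j'l'}=\tfrac12 H_{i'j'l'}$, these are constant multiples of $\bar C_{ijl}$ and $\tilde C_{i'j'l'}$ respectively, which is precisely the claimed splitting.

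\emph{Necessity.} Conversely, assume the Cartan torsion splits; then in particular every mixed component vanishes. I would single out \eqref{4.7}, written as $C_{ij'l'}=\tfrac12 K_i\bigl(f_{KHH}H_{j'}H_{l'}+f_{KH}H_{j'l'}\bigr)=0$. Because $K_iy^i=2K\neq 0$ on $\overline M'$ by \eqref{4.05}, the covector $K_i$ is not identically zero, so the bracket must vanish for all $j',l'$:
\[
f_{KHH}\,H_{j'}H_{l'}+f_{KH}\,H_{j'l'}=0 .
\]
This is the crux of the argument. The matrix $H_{j'l'}=2\tilde g_{j'l'}$ is positive definite, hence of full rank $n=\dim\widetilde M$, whereas $H_{j'}H_{l'}$ has rank one. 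If $f_{KH}$ were nonzero at some point, the displayed identity would force $H_{j'l'}$ to be a scalar multiple of the rank-one matrix $H_{j'}H_{l'}$, contradicting positive definiteness as soon as $n\geq 2$. Hence $f_{KH}\equiv 0$ on $(0,\infty)\times(0,\infty)$; the symmetric computation with \eqref{4.6} and the positive definiteness of $K_{ij}$ gives the same conclusion whenever $m\geq 2$, so it is enough that one factor have dimension at least two.

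Finally, since $f$ is positively $1$-homogeneous in $(K,H)$ and $f_{KH}\equiv 0$, Lemma~\ref{lemma2} yields that $f$ is linear, $f=aH+bK$ with $a,b$ constant; substituting back shows $f_K,f_H$ are constant and recovers the pure parts as the stated multiples of $\bar C$ and $\tilde C$, identifying the constants. The step I expect to be the main obstacle is exactly the rank argument: extracting $f_{KH}=0$ from one tensor identity genuinely requires the positive definiteness of the factor fundamental tensors and a dimension assumption on at least one factor. Contracting with $y$ through \eqref{4.05} only produces the scalar relations $2Kf_{KKH}+f_{KH}=0$ and $2Hf_{KHH}+f_{KH}=0$, which are consistent with, but do not by themselves imply, $f_{KH}=0$.
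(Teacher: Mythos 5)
Your proof is correct and follows the same overall route as the paper: sufficiency by direct substitution into Proposition~\ref{p3.1}, and necessity by reducing to $f_{KH}\equiv 0$ and invoking Lemma~\ref{lemma2}. The difference is that the paper disposes of the converse with the single sentence that it is ``immediate from Proposition~\ref{p3.1} and Lemma~\ref{lemma2},'' leaving unexplained exactly the step you identify as the crux: how the vanishing of the mixed components forces $f_{KH}\equiv 0$, which is the hypothesis Lemma~\ref{lemma2} actually needs. Your rank argument supplies this: from $f_{KHH}H_{j'}H_{l'}+f_{KH}H_{j'l'}=0$, positive definiteness of $H_{j'l'}=2\tilde g_{j'l'}$ versus the rank-one matrix $H_{j'}H_{l'}$ rules out $f_{KH}\neq 0$ once $\dim\widetilde M\geq 2$ (or symmetrically $\dim\overline M\geq 2$). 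Your closing observation is also accurate and worth keeping: contracting with $y$ via \eqref{4.05} yields only $2Kf_{KHK}+f_{KH}=0$ and $2Hf_{KHH}+f_{KH}=0$, which are equivalent to each other by the $(-1)$-homogeneity of $f_{KH}$ and so carry no new information; indeed when $m=n=1$ they admit the nonlinear solution $f=aK+bH+c\sqrt{KH}$, so the dimension hypothesis you flag is genuinely necessary and the theorem as stated silently assumes it. In short, your write-up is a strictly more complete version of the paper's argument rather than a different one.
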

						\begin{proof}
							If $f$ is linear then all the second and higher order derivatives are zero. Hence from the above proposition results are follow immediately.
							
							The converse part is immediate from Proposition \ref{p3.1} and Lemma \ref{lemma2}.
						\end{proof}
						\subsection*{\textbf{Norm of the Cartan torsion} }
						Next we compute the norm of the Cartan torsion of the Minkowskian product metric.

						By definition of the norm of the Cartan torsion:
						\[
						\|C\|^2 = C_{\alpha \beta \gamma} C^{\alpha \beta \gamma} = g^{\alpha \nu} g^{\beta \mu} g^{\gamma \eta} C_{\alpha \beta \gamma} C_{\nu \mu \eta}.
						\]
						Let us assume
						\begin{equation}
						\|\bar{C}\|^2 = \bar{C}_{ijk} \bar{C}^{ijk}, \quad
						\tilde{C}^2 = \tilde{C}_{i'j'k'} \tilde{C}^{i'j'k'}.   
						\end{equation}
						Therefore, 
						\begin{equation}
						\|C\|^2 =  g^{ia} g^{jb} g^{kc} C_{ijk} C_{abc}+  g^{i'a'} g^{j'b'} g^{k'c'} C_{i'j'k'} C_{a'b'c'} + \quad \textnormal{other mixed terms}
						\end{equation}
						
						Using \eqref{4.4}, \eqref{4.008} and the fact $y^iC_{ijk}=0$ we obtain
						\begin{equation}
						\begin{split}
						g^{ia} g^{jb} g^{kc} C_{ijk} C_{abc}=\frac{1}{f_K^2}\|\bar{C}\|^2-\frac{f_Hf_{KK}}{\Delta}\left(\sum_{i,j,k,a,b,c}K^{bj}K^{ck}y^ay^i\right)\bar{C}_{ijk}\bar{C}_{abc}\\ -\left( \frac{f_Hf_{KK}}{\Delta}\right)^2\left( \sum_{i,j,k,a,b,c}K^{bj}y^ay^cy^iy^k\right)\bar{C}_{ijk}\bar{C}_{abc}+  \left( \frac{f_Hf_{KK}}{\Delta}\right)^3y^iy^jy^ky^ay^by^c \bar{C}_{ijk}\bar{C}_{abc}\\
						= \frac{1}{f_K^2}\|\bar{C}\|^2 \hspace{10cm}
						\end{split}
						\end{equation}
						Similarly,
						\begin{equation}
						\begin{split}
						g^{i'a'} g^{j'b'} g^{k'c'} C_{i'j'k'} C_{a'b'c'}=\frac{1}{f_H^2}\|\tilde{C}\|^2
						\end{split}
						\end{equation}
						Also by the fact $y^iC_{ijk}=0$ it can be easily obtain that all the mixed terms become zero.
						Therefore,
						\begin{equation}\label{eq3}
						\|C\|^2 =
						\frac{1}{f_K^2} \|\bar{C}\|^2 + \frac{1}{f_H^2}\|\tilde{C}^2\|.   
						\end{equation}
						This gives the following theorem:
						\begin{theorem}
							\textnormal{ The norm of Cartan torsion of the Minkowskian product $(M,F)$ of the Finsler manifolds $(\overline{M}, \overline{F})$ and $(\widetilde{M}, \widetilde{F})$ is given by }
							\begin{equation}
							\|C\|^2 =
							\frac{1}{f_K^2} \|\bar{C}\|^2 + \frac{1}{f_H^2}\|\tilde{C}^2\|.  
							\end{equation}
						\end{theorem}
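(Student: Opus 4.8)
The plan is to evaluate $\|C\|^2=g^{\alpha\nu}g^{\beta\mu}g^{\gamma\eta}C_{\alpha\beta\gamma}C_{\nu\mu\eta}$ by hand, splitting every one of the six summation indices into a ``barred'' range $1,\dots,m$ and a ``tilde'' range $m+1,\dots,m+n$, and then grouping the resulting sum into three parts: the pure-barred part (all six indices barred), the pure-tilde part, and all the remaining mixed parts. Into each part I substitute the explicit Cartan components of Proposition~\ref{p3.1} and the block form of the inverse metric from \eqref{4.4}. The organizing observation is that every block of $g^{\alpha\beta}$ splits into a transverse piece plus a piece proportional to $y\otimes y$: the diagonal blocks are $G^{ia}=\frac{1}{f_K}K^{ia}-\frac{1}{f_K}\frac{f_Hf_{KK}}{\Delta}y^iy^a$ and $G^{i'a'}=\frac{1}{f_H}H^{i'a'}-\frac{1}{f_H}\frac{f_Kf_{HH}}{\Delta}y^{i'}y^{a'}$, whereas the off-diagonal blocks $G^{ia'}=-\frac{1}{\Delta}f_{KH}y^iy^{a'}$ and $G^{i'a}=-\frac{1}{\Delta}f_{KH}y^{i'}y^{a}$ are \emph{purely} radial.

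For the pure-barred part I expand $G^{ia}G^{jb}G^{kc}$ and collect terms. The leading contribution $\frac{1}{f_K^3}K^{ia}K^{jb}K^{kc}C_{ijk}C_{abc}$ is the surviving one; rewriting the all-barred component $C_{ijk}$ of \eqref{4.5} in terms of $\bar C_{ijk}$ and using the Euler identities \eqref{4.05} ($K_{ij}y^j=K_i$, $K_iy^i=2K$) together with the homogeneity relations $f_{KK}K+f_{KH}H=0$, $f_{HK}K+f_{HH}H=0$, $f_{KH}^{2}=f_{KK}f_{HH}$ and their $K$-derivatives, this collapses to $\frac{1}{f_K^2}\|\bar C\|^2$. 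Every other term in the expansion carries at least one radial factor $y^i$ or $y^a$, and here the key tool is the transversality of the factor Cartan torsion $y^i\bar C_{ijk}=0$, which annihilates each such term. The pure-tilde part is treated identically and produces $\frac{1}{f_H^2}\|\tilde C\|^2$.

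For the mixed parts I argue vanishing. A mixed term must use either an off-diagonal block $G^{ia'}$, which is radial, or the radial correction of a diagonal block; in every case a radial vector $y$ is contracted into one slot of a Cartan component, and transversality forces the term to zero. The mixed-type Cartan components themselves, for instance $C_{ij'l}$ of \eqref{4.006} or $C_{i'j'l}$ of \eqref{4.008}, are eliminated for the same reason once their indices are raised: each such contraction inserts a radial factor that meets a transverse tensor. Assembling the surviving pure-barred and pure-tilde contributions then yields the asserted identity $\|C\|^2=\frac{1}{f_K^2}\|\bar C\|^2+\frac{1}{f_H^2}\|\tilde C\|^2$.

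The main obstacle I anticipate lies in the pure-barred (and pure-tilde) block, not in the mixed terms. The full component $C_{ijk}$ is \emph{not} itself transverse, because of the $f_{KK}$- and $f_{KKK}$-terms proportional to $K_i$, so one cannot discard the $y\otimes y$ corrections of $g^{-1}$ at a glance; the delicate point is that the non-transverse part of $C_{ijk}$ must cancel exactly against the radial corrections of $G^{ia}$. This is precisely where the homogeneity relations among $f_K,f_H,f_{KK},f_{KH},\dots$ must be invoked, and verifying this cancellation—together with correctly isolating the scalar factor $f_K^{-2}$—is the heart of the argument, the mixed-term analysis being comparatively routine.
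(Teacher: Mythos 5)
Your proposal follows the same route as the paper's own computation: expand $\|C\|^2=g^{\alpha\nu}g^{\beta\mu}g^{\gamma\eta}C_{\alpha\beta\gamma}C_{\nu\mu\eta}$ blockwise using the inverse metric \eqref{4.4} and the components of Proposition~\ref{p3.1}, annihilate the mixed and radial contributions via transversality of the factor Cartan torsions, and read off $\frac{1}{f_K^2}\|\bar C\|^2+\frac{1}{f_H^2}\|\tilde C\|^2$. If anything, you are more careful than the paper, which silently replaces the block component $C_{ijk}$ by $\bar C_{ijk}$ and invokes ``$y^iC_{ijk}=0$'' without addressing the non-transverse $f_{KK}$- and $f_{KKK}$-terms whose cancellation you correctly identify as the delicate step.
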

						
						\begin{theorem}
							\textnormal{
								The norm of Cartan torsion of the  Minkowskian product $(M,F)$ of the Finsler manifolds $(\overline{M}, \overline{F})$ and $(\widetilde{M}, \widetilde{F})$ is bounded if and only if both $(M,F)$ and $(\overline{M}, \overline{F})$  have bounded Cartan torsion
							}    
						\end{theorem}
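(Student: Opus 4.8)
The plan is to read off both implications from the formula established in the preceding theorem,
\[
\|C\|^2 = \frac{1}{f_K^2}\,\|\bar{C}\|^2 + \frac{1}{f_H^2}\,\|\tilde{C}\|^2,
\]
exploiting that the right-hand side is a sum of two \emph{non-negative} terms. First I would record that, by the hypotheses (a)--(e) on the product function $f$ — in particular the non-vanishing condition (d) — both $f_K$ and $f_H$ are nowhere zero on $(0,\infty)\times(0,\infty)$, so the multipliers $1/f_K^2$ and $1/f_H^2$ are well defined and strictly positive. The biconditional then reduces to a comparison of the combined norm against the two factor norms, and I would treat the two implications separately.

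For the ``if'' direction, assume the two factor manifolds $(\overline{M},\overline{F})$ and $(\widetilde{M},\widetilde{F})$ have bounded Cartan torsion, say $\|\bar{C}\|^2 \le A$ and $\|\tilde{C}\|^2 \le B$. Substituting into the formula gives $\|C\|^2 \le A/f_K^2 + B/f_H^2$, and it remains only to bound the two coefficients from above, which is precisely the point where one must keep $f_K$ and $f_H$ bounded away from zero. For the ``only if'' direction, assume $\|C\|^2$ is bounded; since each summand is non-negative, discarding one of them yields $\tfrac{1}{f_K^2}\|\bar{C}\|^2 \le \|C\|^2$ and $\tfrac{1}{f_H^2}\|\tilde{C}\|^2 \le \|C\|^2$, so that $\|\bar{C}\|^2 \le f_K^2\,\|C\|^2$ and $\|\tilde{C}\|^2 \le f_H^2\,\|C\|^2$. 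Here one instead needs $f_K^2$ and $f_H^2$ bounded \emph{above} to transfer boundedness back to the factors.

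The main obstacle is therefore not the algebra but the control of the multipliers: both directions close once $f_K$ and $f_H$ are pinched between two positive constants, $0 < c \le f_K,\, f_H \le C < \infty$, on the relevant range of $(K,H)$. I would obtain this from the positive $1$-homogeneity of $f$, which makes $f_K$ and $f_H$ both $0$-homogeneous, hence functions of the single ratio $K/H$; being continuous and non-vanishing by (d), they are bounded above and below on any compact range of this ratio. In the Euclidean case $f = aK + bH$ the control is immediate and sharp, since $f_K \equiv a$ and $f_H \equiv b$ are constants and the identity collapses to $\|C\|^2 = a^{-2}\|\bar{C}\|^2 + b^{-2}\|\tilde{C}\|^2$, making the equivalence transparent. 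The real work lies in pinning down the conditions on $f$ that prevent $f_K$ or $f_H$ from degenerating to $0$ or blowing up as $K/H$ tends to $0$ or $\infty$; for a general homogeneous $f$ (for instance the $L^p$-product) these multipliers need not remain bounded, so isolating the hypothesis that guarantees their two-sided control is the crux on which the theorem turns.
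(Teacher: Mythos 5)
Your proposal follows exactly the route the paper intends: the paper gives no proof of this theorem at all, presenting it as an immediate consequence of the additive formula $\|C\|^2=\tfrac{1}{f_K^2}\|\bar{C}\|^2+\tfrac{1}{f_H^2}\|\tilde{C}\|^2$ established just above it, and your two implications (drop a non-negative summand for ``only if'', substitute the bounds for ``if'') are the intended argument. You have, however, put your finger on a genuine gap that the paper simply glosses over: the deduction requires $f_K$ and $f_H$ to be pinched between two positive constants uniformly over the relevant range of $(K,H)$, and the hypotheses (a)--(e) do not supply this. Conditions (c) and (d) give smoothness and non-vanishing only on the \emph{open} quadrant $(0,\infty)\times(0,\infty)$, while on the product $M'=\overline{M}'\times\widetilde{M}'$ the ratio $K/H$ sweeps out all of $(0,\infty)$ (scale $\bar{y}$ and $\tilde{y}$ independently), so the $0$-homogeneous functions $f_K$, $f_H$ are only controlled on compact subsets of that ratio and may degenerate to $0$ or blow up as $K/H\to 0$ or $\infty$ --- your $L^p$ example with $p\neq 2$ shows this actually happens for admissible product functions. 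So your proof is not closable as written, but the obstruction you isolate is an obstruction to the theorem as stated, not merely to your argument; the statement is safe in the Euclidean case $f=aK+bH$ (where $f_K\equiv a$, $f_H\equiv b$), which is precisely the extra hypothesis the paper imposes in the parallel theorem on the mean Cartan torsion, and which should presumably appear here as well. (You also silently corrected the statement's typo: ``both $(M,F)$ and $(\overline{M},\overline{F})$'' should read ``both $(\overline{M},\overline{F})$ and $(\widetilde{M},\widetilde{F})$''.)
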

						\begin{corollary}
							\textnormal{
								The  Cartan torsion of  Minkowskian product of two Randers metric is bounded.
							}
						\end{corollary}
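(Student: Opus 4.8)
The plan is to derive the corollary as an immediate consequence of the preceding boundedness criterion, combined with the Randers estimate of \cite{XCZH}. The theorem just established asserts that the Cartan torsion of the Minkowskian product $(M,F)$ is bounded precisely when both factor metrics have bounded Cartan torsion; hence it suffices to check that any Randers metric has bounded Cartan torsion and then to quote the theorem.

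First I would invoke the lemma of \cite{XCZH}. For a Randers factor $\overline{F}=\bar\alpha+\bar\beta$ it gives
\[
\|\overline{\mathbf{C}}\|\;\le\;\frac{3}{\sqrt{2}}\sqrt{1-\sqrt{1-\bar b^{\,2}}}\;<\;\frac{3}{\sqrt{2}},
\qquad \bar b=\|\bar\beta\|_{\bar\alpha},
\]
and likewise $\|\widetilde{\mathbf{C}}\|<\tfrac{3}{\sqrt 2}$ for the second Randers factor $\widetilde{F}=\tilde\alpha+\tilde\beta$. The point I would stress is that the bound $\tfrac{3}{\sqrt 2}$ is \emph{uniform}: since $b\in[0,1)$ forces $\sqrt{1-\sqrt{1-b^2}}<1$, the estimate holds independently of the base point and of the particular value of $b$, so each factor metric has uniformly bounded Cartan torsion. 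With both hypotheses of the preceding theorem satisfied, its conclusion delivers at once that $\|\mathbf{C}\|$ is finite for the Minkowskian product, which is the assertion of the corollary.

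The only point that genuinely warrants attention is the passage from the fiberwise Randers estimate to a bound on the product, and this is exactly the content of the theorem being invoked: the pointwise identity $\|C\|^2=f_K^{-2}\|\bar C\|^2+f_H^{-2}\|\tilde C\|^2$ in \eqref{eq3} must be paired with control of the weights $f_K^{-2}$ and $f_H^{-2}$. Here I would note that, by the homogeneity relations of \cite{WZ}, $f_K$ and $f_H$ are homogeneous of degree zero in $(K,H)$ and are nonvanishing by condition $\mathbf{(d)}$, so these weights stay bounded on the relevant range of directions and cannot destroy boundedness. I expect this verification of the weights — subsumed in the theorem I am assuming — to be the only mild obstacle; beyond it the corollary is a direct combination of the \cite{XCZH} bound with the splitting formula \eqref{eq3}.
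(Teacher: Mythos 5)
Your argument is exactly the one the paper intends: the corollary is stated as an immediate consequence of the preceding theorem together with the Randers estimate $\|\mathbf{C}\|<\tfrac{3}{\sqrt{2}}$ from \cite{XCZH}, and you combine precisely these two ingredients. Your additional remark about controlling the weights $f_K^{-2}$ and $f_H^{-2}$ in \eqref{eq3} is a sensible extra precaution (zero-homogeneity and nonvanishing of $f_K$, $f_H$ do not by themselves bound them away from zero on the closure of the quadrant, so strictly that point deserves the attention you give it), but it concerns the theorem being invoked rather than the corollary itself.
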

						\section{Mean Cartan torsion}
						In this section we study the mean Cartan torsion and it's norm for the Minkowskian product of Finsler manifolds. The mean cartan torsion is defined by $I_{\alpha}=g^{\beta\gamma}C_{\alpha\beta\gamma}$. Therefore, for $\alpha=i$
						\begin{equation}\label{4.10}
						I_i= g^{jl}C_{ijl}+g^{j'l}C_{ij'l}+g^{jl'}C_{ijl'}+g^{j'l'}C_{ij'l'}
						\end{equation}
						Now
						\begin{equation}
						\begin{split}
						g^{jl} C_{ijl}&=
						\frac{1}{f_K} (k^l - f_H f_{KK} y^j y^l) [f_K K_{jl} + f_{KK} (K_j K_l + K_l K_j + K_j K_l)]\\
						&= \frac{1}{f_K} [f_K K_{jl} + f_{KK} (K_j K_l + K_l K_j + K_j K_l) + f_{KKK} K_j K_l K_i - \frac{f_H f_{KK} K_j y^l}{\Delta}\\
						&+ f_{KK} (K_j K_l y^l + K_l K_j y^l + K_j K_l y^l) + f_{KKK} K_j K_l K_i K_j K_l]\\
						&= \frac{1}{f_K} [f_K K_{jl} + f_{KK} (K_i (n_i+2) + 2 f_{KKK} K_i - f_H f_{KK} \{6 K_i f_K K_j + 4K^2 f_{KKK}\}]\\
						&= \frac{1}{f_K} \left[f_K K_{jl} + K_i ((n_i+2) f_{KK} + 2K f_{KKK}) - \frac{2K K_i f_H f_{KK} (3 f_{KK} + 2K f_{KKK})}{\Delta}\right], 
						\end{split}
						\end{equation}
						
						Similarly we obtain,
						\begin{equation}
						\begin{split}
						g^{j i'} C_{i j i'}  &= -\frac{2 K_i H}{\Delta} f_{KH} \big( 2K f_{KH,K} + f_{KH} \big),\\  g^{jl'} C_{i jl'} &= -\frac{2 K_i H}{\Delta} f_{KH} 
						\big( f_{KH} + 2K f_{KH,K} \big), \quad \textnormal{and} \quad \\ g^{j'l'} C_{jl'} 
						&= \frac{1}{f_H} \left[2 f_{KHH} K_i H + \eta_2 f_{KH} K_i - \frac{f_{KHH}}{\Delta} \left(f_{KHH} K_i H^2 + 2 f_{KH} K_i H\right)\right]
						\end{split}
						\end{equation}
						Adding all the terms  we obtain
						\begin{equation}\label{4.11}
						I_i= \bar{I_i}+K_iU,
						\end{equation}\label{4.12}
						Where $\bar{I_i}=K^{jk}K_{ijk}$ is the mean Cartan torsion of $\overline{M}$ and 
						\begin{equation}\label{4.41}
						\begin{split}
						U=\frac{1}{f_K}\left[(n_1+2)f_{KK}+2Kf_{KK}-\frac{2Kf_Hf_{KK}}{\Delta}(3f_{KK}+2Kf_{KKK}) \right]\\-\frac{4H}{\Delta}f_{KH}(2Kf_{KKH}+f_{KH})\\+ \frac{1}{f_H}\left[ 2Hf_{KHH}+f_{KH}+n_2f_{KH}-\frac{2Hf_Kf_{HH}}{\Delta}(2Hf_{KHH}+f_{KH}) \right]
						\end{split}
						\end{equation}
						Similarly, for $\alpha=i'$
						\begin{equation}\label{4.13}
						I_{i'}= \tilde{I_i}+H_{i'}V,
						\end{equation}
						Where $\tilde{I_i}=H^{j'k'}H_{i'j'k'}$ is the mean Cartan torsion of $\tilde{M}$ and 
						\begin{equation}\label{4.42}
						\begin{split}
						V=\frac{1}{f_K}\left[2Kf_{KKH}+n_1f_{KH}-\frac{2Kf_Hf_{KK}}{\Delta}(2Kf_{KKH}+f_{KH}) \right]\\-\frac{4K}{\Delta}f_{KH}(2Hf_{KHH}+f_{KH})\\+ \frac{1}{f_H}\left[ (n_2+2)f_{HH}+2Hf_{HH}-\frac{2Hf_Kf_{HH}}{\Delta}(3f_{HH}+2Hf_{HHH}) \right]
						\end{split}
						\end{equation}
						Now rearranging the terms of \eqref{4.41} we can write
						\begin{equation}\label{4.06}
						\begin{split}
						U=\frac{f_{KK}}{f_K}(n_1+2) +n_2\frac{f_{KH}}{f_H}+\frac{1}{f_K}\left[ 2Kf_{KK}-\frac{2Kf_Hf_{KK}}{\Delta}(3f_{KK}+2Kf_{KKK}) \right]\\-\frac{4H}{\Delta}f_{KH}(2Kf_{KKH}+f_{KH})+  \frac{1}{f_H}\left[ 2Hf_{KHH}+f_{KH}-\frac{2Hf_Kf_{HH}}{\Delta}(2Hf_{KHH}+f_{KH}) \right]
						\end{split}
						\end{equation}
						Suppose $U=0$. Since $n_1$ and $n_2$ are arbitrary positive constants from the first two terms of the above equation we have $f_{KK}=0$ and $f_{KH}=0$. If $V=0$ by similar argument from \eqref{4.42} we get $f_{HH}=0$. Hence, $f$ is linear in $H$ and $K$. It is evident that if $f$ is linear then $U=V=0$. Hence we have the following proposition:
						\begin{proposition}
							\textnormal{
								The mean Cartan torsion of the Minkowskian product $(M,F)$ of the Finsler manifolds $(\overline{M}, \overline{F})$ and $(\widetilde{M}, \widetilde{F})$ are 
								\begin{equation}
								I_{\alpha}= \begin{cases}\overline{I}_i  \quad :  \alpha = i \\  \widetilde{I}^{i'}   \quad :  \alpha = i'\end{cases}
								\end{equation}
								if and only if $f$ is linear in $H$ and $K$.
							}
						\end{proposition}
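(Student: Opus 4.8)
The plan is to derive the proposition directly from the two decomposition identities already obtained, namely $I_i=\overline{I}_i+K_iU$ in \eqref{4.11} and $I_{i'}=\widetilde{I}_{i'}+H_{i'}V$ in \eqref{4.13}. Under these, the asserted splitting (i.e. $I_\alpha=\overline{I}_i$ for $\alpha=i$ and $I_\alpha=\widetilde{I}_{i'}$ for $\alpha=i'$) is exactly equivalent to the pointwise vanishing $K_iU\equiv 0$ and $H_{i'}V\equiv 0$ on $M'$. Thus the entire statement reduces to the single equivalence $U\equiv 0$ and $V\equiv 0$ if and only if $f$ is linear in $K$ and $H$.

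First I would eliminate the factors $K_i$ and $H_{i'}$. Contracting with the fibre coordinate and using \eqref{4.05} gives $K_iy^i=2K=2\overline{F}^2>0$ and $H_{i'}y^{i'}=2H=2\widetilde{F}^2>0$ everywhere on $M'$, so the covectors $(K_i)$ and $(H_{i'})$ are nowhere zero. Consequently $K_iU\equiv 0$ forces $U\equiv 0$ and $H_{i'}V\equiv 0$ forces $V\equiv 0$; this is the easy half of the reduction and requires nothing beyond homogeneity of $K$ and $H$.

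For the equivalence $U\equiv V\equiv 0\iff f$ linear, the forward direction is immediate: if $f=aK+bH$ then all second- and higher-order partials of $f$ vanish, and inspection of \eqref{4.41} and \eqref{4.42} gives $U=V=0$. The converse is the crux, and here I would lean on the homogeneity relations for the $1$-homogeneous function $f$, in particular $f_{KK}K+f_{KH}H=0$ and $f_{KH}K+f_{HH}H=0$. Since $K,H>0$ on $M'$, these show that all three second derivatives are proportional, so it suffices to prove that a single one of them, say $f_{KH}$, vanishes: then $f_{KK}=f_{HH}=0$ follow at once and Lemma \ref{lemma2} concludes that $f$ is linear. To extract $f_{KH}=0$ I would use the rearranged form \eqref{4.06} of $U$, in which the dimensions $n_1,n_2$ enter only through the two leading coefficients $\tfrac{f_{KK}}{f_K}(n_1+2)$ and $n_2\tfrac{f_{KH}}{f_H}$, the remaining terms being independent of $n_1,n_2$; treating $n_1$ and $n_2$ as free parameters of the product construction then isolates $\tfrac{f_{KK}}{f_K}=0$ and $\tfrac{f_{KH}}{f_H}=0$, whence $f_{KK}=f_{KH}=0$ because $f_K,f_H\neq 0$.

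The step I expect to be the main obstacle is precisely this decoupling: for a single fixed pair $(n_1,n_2)$ the equation $U\equiv 0$ is only one scalar relation among several derivative quantities, so reading off a coefficient is not by itself rigorous. To make the argument watertight without invoking ``arbitrary $n_1,n_2$'', I would first collapse all the non-leading terms of \eqref{4.06} using the homogeneity relations together with their $K$- and $H$-derivatives (for instance $f_{KKK}K+f_{KKH}H=-f_{KK}$ and $f_{KKH}K+f_{KHH}H=-f_{KH}$), which I expect to reduce $U$ to a clean multiple of $f_{KH}$ (equivalently of $f_{KK}$); the vanishing of $U$ then forces that single derivative to be zero and the homogeneity relations finish the proof. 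The symmetric treatment of $V$ via \eqref{4.42} yields the same conclusion and completes the equivalence.
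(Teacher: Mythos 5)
Your proposal follows the paper's own route almost step for step: you start from the decompositions $I_i=\overline{I}_i+K_iU$ and $I_{i'}=\widetilde{I}_{i'}+H_{i'}V$ of \eqref{4.11} and \eqref{4.13}, reduce the proposition to the equivalence $U\equiv V\equiv 0\iff f$ linear, and settle the forward direction by noting that all second and higher derivatives of a linear $f$ vanish. Your remark that $K_iy^i=2K>0$ and $H_{i'}y^{i'}=2H>0$, so that $K_iU\equiv 0$ genuinely forces $U\equiv 0$, is a small improvement the paper does not bother to make explicit. The divergence is in the converse, and the argument you single out as suspect is exactly the one the paper uses: it reads off the coefficients of $n_1$ and $n_2$ in \eqref{4.06} ``since $n_1$ and $n_2$ are arbitrary,'' which, as you observe, is not rigorous for a fixed pair of factor manifolds --- $U\equiv 0$ is a single scalar identity, not a polynomial identity in the dimensions.

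However, your proposed repair is only announced, not executed. You assert that the homogeneity identities $f_{KK}K+f_{KH}H=0$, $f_{KH}K+f_{HH}H=0$ and their $K$- and $H$-derivatives should collapse $U$ to ``a clean multiple of $f_{KH}$,'' but you neither carry out this reduction nor verify that the resulting coefficient is nowhere vanishing; if that coefficient could vanish somewhere, $U\equiv 0$ would not force $f_{KH}\equiv 0$ there, and the argument would stall. (The reduction is also delicate in practice, since the printed expressions \eqref{4.41}--\eqref{4.42} contain visible inconsistencies with the computation that precedes them, so one would first have to recompute $U$ and $V$ reliably.) As written, then, the hard direction of the equivalence is fully established neither by the paper nor by your proposal; yours has the merit of locating the gap precisely and of setting up a correct endgame --- once $f_{KH}\equiv 0$ is secured, the homogeneity relations give $f_{KK}=f_{HH}=0$ and Lemma \ref{lemma2} yields linearity --- but the decisive computation is missing.
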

						\subsection*{\textbf{Norm of mean Cartan torsion}}
						
						Next we find the norm of mean Cartan torsion $\|I\|^2$. Let us denote the norm of mean Cartan torsion of the Finsler manifolds $\bar{F}$ and $\tilde{F}$ by $\bar{\|I\|}$ and $\tilde{\|I\|}$ respectively. Therefore,
						\begin{equation}
						\bar{\|I\|}^2=K^{li} I_iI_l, \quad    \tilde{\|I\|}^2=H^{j'i'}I_{i'}I_{j'}
						\end{equation}
						Hence, 
						\begin{align*}
							\|I\|^2 &= g^{\alpha \beta} I_{\alpha} I_{\beta} \\
							\|I\|^2 &= g^{ij} I_i I_j + g^{ij'} I_i I_j' + g^{i'j} I_{i'} I_j + g^{i'j'} I_{i'} I_{j'} \\
							&= \frac{1}{f_K} \left( K^{li} - \frac{f_H f_{KK} y^i y^j}{\Delta} \right) (I_i + K_i U) (I_j + K_j U)\\ & - \frac{1}{\Delta} f_{KH} y^i y^j (I_i + K_i U) (I_j + K_j U) \\
							&\quad + \frac{1}{f_H} \left( H^{ji} - \frac{f_K f_{HH} y^i y^j}{\Delta} \right) (I_i + H_i V) (I_j + H_j V) \\
							&= \frac{1}{f_K} \left( \bar{I}^2 + 2U^2 K - \frac{4V^2 f_{H} f_{KK}}{\Delta} \right) - \frac{8 f_{KH} UV KH}{\Delta} \\
							&\quad + \frac{1}{f_H} \left( H^{ji} - \frac{f_K f_{HH} y^i y^j}{\Delta} \right) (\tilde{I}_i \tilde{I}_j + \tilde{I}_i H_j V + H_i V \tilde{I}_j + H_i H_j V^2) \\
							I^2 &= \frac{1}{f_K} \left( \bar{I}^2 + 2U^2 K - \frac{4V^2 f_H f_{KK}}{\Delta} \right) - \frac{8 f_{KH} UV KH}{\Delta} \\
							&\quad + \frac{1}{f_H} \left( \tilde{I}^2 + 2V^2 H - \frac{4V^2 f_K f_{HH}}{\Delta} \right)
						\end{align*}
						Now if $f$ is linear and if it is of the form $f=aH+bK$, then already we have seen that $U=0$ and $V=0$. Hence 
						\begin{equation}\label{mean}
						I^2 = \frac{1}{a} \bar{I}^2+  \frac{1}{b} \tilde{I}^2 
						\end{equation}
						Hence from \eqref{mean} we have the following theorem:
						
						\begin{theorem}
							\textnormal{
								The norm of mean Cartan torsion of the Euclidean Minkowskian product $(M,F)$ of the Finsler manifolds $(\overline{M}, \overline{F})$ and $(\widetilde{M}, \widetilde{F})$ is bounded if and only if both $(M,F)$ and $(\overline{M}, \overline{F})$  have bounded mean Cartan torsion.
							}    
						\end{theorem}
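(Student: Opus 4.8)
The plan is to read the result directly off the closed-form identity \eqref{mean}. Recall that for the Euclidean Minkowskian product the function $f$ is linear, so all second-order derivatives of $f$ vanish and the auxiliary scalars $U$ and $V$ of \eqref{4.41} and \eqref{4.42} are identically zero. Consequently the general expression for $I^2$ derived just above \eqref{mean} collapses to
\[
I^2 = \tfrac{1}{a}\,\bar{I}^2 + \tfrac{1}{b}\,\tilde{I}^2,
\]
where $\bar{I}^2 = K^{li}\bar{I}_i\bar{I}_l \ge 0$ and $\tilde{I}^2 = H^{j'i'}\tilde{I}_{i'}\tilde{I}_{j'} \ge 0$ are the squared mean-Cartan-torsion norms of the two factors and $a,b$ are the fixed positive constants of the product function. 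The entire argument then reduces to the elementary fact that a sum of two non-negative quantities weighted by positive constants is bounded exactly when each summand is.

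First I would treat the ``if'' direction. Assume $(\overline{M},\overline{F})$ and $(\widetilde{M},\widetilde{F})$ have bounded mean Cartan torsion, say $\bar{I}^2 \le M_1$ and $\tilde{I}^2 \le M_2$. Since $a,b>0$, the displayed identity gives $I^2 \le \tfrac{1}{a}M_1 + \tfrac{1}{b}M_2 < \infty$, so the product has bounded mean Cartan torsion. For the converse, suppose $I^2 \le M$. Because both summands are non-negative, each is dominated by the total: $\tfrac{1}{a}\bar{I}^2 \le I^2 \le M$ and $\tfrac{1}{b}\tilde{I}^2 \le I^2 \le M$. Multiplying through by the finite positive constants $a$ and $b$ respectively yields $\bar{I}^2 \le aM$ and $\tilde{I}^2 \le bM$, so both factor metrics have bounded mean Cartan torsion. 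Combining the two implications establishes the equivalence.

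The computation itself is routine; the step carrying the real content is the reduction to \eqref{mean}. It is precisely the assumption that the product is \emph{Euclidean}---that is, that $f$ is linear in $K$ and $H$---which forces $U=V=0$ and makes the weights $1/a,\,1/b$ constant, thereby removing the direction-dependent pieces $2U^2K$, $2V^2H$ and the cross term $8 f_{KH} U V K H/\Delta$ from the general formula for $I^2$. Without Euclideanness these terms survive, the relation is no longer a positive linear combination of $\bar{I}^2$ and $\tilde{I}^2$ alone, and the two-sided comparison underlying the equivalence fails. Thus I expect the only genuine obstacle to be organizing the vanishing of $U$, $V$, and the cross term in the linear case so as to justify passing from the full expression to \eqref{mean}; everything after that is a one-line estimate in each direction.
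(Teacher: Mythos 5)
Your proposal is correct and follows essentially the same route as the paper: the paper also derives the identity \eqref{mean} by observing that linearity of $f$ forces $U=V=0$ in the general expansion of $\|I\|^2$, and then deduces the theorem directly from the resulting positive linear combination $\frac{1}{a}\bar{I}^2+\frac{1}{b}\tilde{I}^2$. You merely spell out the elementary two-sided estimate that the paper leaves implicit in the phrase ``Hence from \eqref{mean} we have the following theorem.''
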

						\begin{rem}
							\textnormal{It is interesting to note that for any Minkowskian product of Finsler metrics, the norm of the Cartan torsion admits an additive decomposition in terms of the Cartan torsions of the component metrics. However, a similar decomposition for the mean Cartan torsion holds only when the Minkowskian product is Euclidean.}     
						\end{rem}
						
						\begin{corollary}
							\textnormal{
								The Mean Cartan torsion of Euclidean Minkowskian product of two Randers metric is bounded.
							}
						\end{corollary}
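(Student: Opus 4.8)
The plan is to derive this corollary directly from two facts already in hand: the explicit bound on the mean Cartan torsion of a Randers norm supplied by the cited Lemma, and the boundedness criterion established in the Euclidean case in the theorem proved immediately above. Since a Euclidean Minkowskian product is built from a product function of the form $f = aH + bK$ with $a,b>0$, the entire argument reduces to checking that the hypotheses of that theorem are met when both factors are Randers.

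First I would record that each factor has bounded mean Cartan torsion. If $\bar{F}=\bar{\alpha}+\bar{\beta}$ and $\tilde{F}=\tilde{\alpha}+\tilde{\beta}$ are Randers norms of dimensions $m$ and $n$, the cited Lemma gives $\bar{\|I\|}=\frac{m+1}{\sqrt{2}}\sqrt{1-\sqrt{1-\bar{b}^2}}<\frac{m+1}{\sqrt{2}}$ and $\tilde{\|I\|}=\frac{n+1}{\sqrt{2}}\sqrt{1-\sqrt{1-\tilde{b}^2}}<\frac{n+1}{\sqrt{2}}$, where $\bar{b}=\|\bar{\beta}\|_{\bar{\alpha}}<1$ and $\tilde{b}=\|\tilde{\beta}\|_{\tilde{\alpha}}<1$; in particular both are finite. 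Next I would specialize the formulas of Section~4 to $f=aH+bK$, where $f_{KK}=f_{KH}=f_{HH}=0$, so that $U=V=0$ in \eqref{4.41} and \eqref{4.42}, the mean Cartan torsion splits as $I_i=\bar{I}_i$, $I_{i'}=\tilde{I}_{i'}$, and \eqref{mean} reduces to the clean decomposition $I^2=\frac{1}{a}\bar{I}^2+\frac{1}{b}\tilde{I}^2$. Combining the boundedness of $\bar{\|I\|}$ and $\tilde{\|I\|}$ from the first step with the theorem proved just above — that in the Euclidean case the product has bounded mean Cartan torsion if and only if both factors do — gives boundedness of $\|\mathbf{I}\|$ for the product at once.

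The step I would treat most carefully, and the main obstacle, is the passage from the pointwise scalar identity \eqref{mean} to the supremum norm $\|\mathbf{I}\|$ taken over all of $M'=\overline{M}'\times\widetilde{M}'$: the directions in the two factors may be rescaled independently, so one must control the interaction of the two factor contributions under independent scaling rather than simply adding two uniform bounds. This is precisely the content encapsulated in the preceding boundedness theorem; granting that theorem, the corollary is immediate, and it remains only to note that the Randers hypothesis on each factor supplies the finiteness that the theorem requires of its data. From the two displayed Randers estimates one may, if desired, track the resulting bound on $\|\mathbf{I}\|$ explicitly in terms of $a,b,m,n,\bar{b},\tilde{b}$.
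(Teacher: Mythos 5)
Your proposal is correct and follows exactly the route the paper intends: combine the cited lemma giving $\|\mathbf{I}\| < \frac{n+1}{\sqrt{2}}$ for each Randers factor with the preceding theorem (via the decomposition $I^2 = \frac{1}{a}\bar{I}^2 + \frac{1}{b}\tilde{I}^2$ valid when $f = aH + bK$) to conclude boundedness of the product's mean Cartan torsion. The paper offers no separate proof of the corollary, treating it as immediate from these two ingredients, which is precisely how you have argued.
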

						
						\section{ C2-like Finsler metric Minkowskian product Finsler metrics }
						A Finsler metric \( F \) is said to be C2-like if the components of its Cartan torsion \( C_{ijk} \) are expressed as
						\begin{equation}
						C_{ijk} =  \frac{1}{I^2} I_i I_j I_k,
						\end{equation}
						\begin{theorem}\label{th5.1}
							\textnormal{Let $(M,F)$ be a linear Minkowskian product of the Finsler manifolds $(\overline{M},\overline{F})$ and $(\widetilde{M},\widetilde{F})$. If $(\overline{M},\overline{F})$ and $(\widetilde{M},\widetilde{F})$ are $C2$-like Finsler metrics. Then, $(M,F)$ is a $C2$-like Finsler metric if and only if at least one of the quotient metrics $\overline{F}$ or $\widetilde{F}$ must be Riemannian.}
							
						\end{theorem}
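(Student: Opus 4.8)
The plan is to exploit the linearity of the product together with the splitting results already in hand. Since $(M,F)$ is a linear Minkowskian product, I would write $f = aK + bH$ with $a,b>0$ constants, so that every second- and higher-order derivative of $f$ vanishes. By Theorem~\ref{T3.1} the Cartan torsion then splits as $C_{ijl} = a\bar{C}_{ijl}$, $C_{i'j'l'} = b\tilde{C}_{i'j'l'}$, with all mixed components equal to zero; by \eqref{4.11} and \eqref{4.13} (where $U=V=0$ in the linear case) the mean Cartan torsion satisfies $I_i = \bar{I}_i$ and $I_{i'} = \tilde{I}_{i'}$, while \eqref{mean} gives $I^2 = \tfrac{1}{a}\bar{I}^2 + \tfrac{1}{b}\tilde{I}^2$. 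At the outset I would also record the elementary identity $\|\bar{C}\|^2 = \bar{I}^2$, valid for any C2-like metric and obtained by contracting the defining ansatz three times with the inverse metric; this shows that a C2-like factor is Riemannian exactly when its mean Cartan torsion vanishes, which is the bridge between the hypothesis ``Riemannian'' and the condition $\bar{I}=0$ (respectively $\tilde{I}=0$).

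For the necessity direction, I would assume $(M,F)$ is C2-like and argue by contradiction that neither factor is Riemannian, so that $\bar{I}\neq 0$ and $\tilde{I}\neq 0$. The decisive step is to evaluate the C2-like relation $C_{\alpha\beta\gamma} = \tfrac{1}{I^2} I_\alpha I_\beta I_\gamma$ on a \emph{mixed} triple, for instance $(\alpha,\beta,\gamma)=(i,j,l')$. The left-hand side is zero because mixed Cartan components vanish for a linear product, whereas the right-hand side equals $\tfrac{1}{I^2}\bar{I}_i\bar{I}_j\tilde{I}_{l'}$. Requiring this to vanish for all $i,j,l'$ forces $\bar{I}_i\bar{I}_j\tilde{I}_{l'}=0$ identically, which is impossible once both $\bar{I}$ and $\tilde{I}$ are nonzero. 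Hence at least one factor has vanishing mean Cartan torsion and, being C2-like, must be Riemannian.

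For the sufficiency direction, I would assume without loss of generality that $\widetilde{F}$ is Riemannian, so $\tilde{C}=0$, $\tilde{I}=0$, and therefore $I_{i'}=0$ and $I^2 = \tfrac{1}{a}\bar{I}^2$. I would then check the ansatz block by block. On the pure $\overline{M}$ block, $C_{ijl} = a\bar{C}_{ijl} = \tfrac{a}{\bar{I}^2}\bar{I}_i\bar{I}_j\bar{I}_l$ using that $\overline{F}$ is C2-like, while $\tfrac{1}{I^2}I_iI_jI_l = \tfrac{a}{\bar{I}^2}\bar{I}_i\bar{I}_j\bar{I}_l$, so the two agree. Every remaining block has at least one index in the $\widetilde{M}$ range; there the component of $C$ is zero (it is either a mixed component or $b\tilde{C}_{i'j'l'}=0$), and the right-hand side carries a factor $I_{i'}=0$, so both sides vanish. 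This verifies that $(M,F)$ is C2-like and closes the equivalence.

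The main obstacle is conceptual rather than computational: one must recognize that it is the \emph{mixed} components of the Cartan torsion that carry the obstruction, since the pure blocks alone merely relate $I^2$ to the individual norms and could in principle hold simultaneously. A secondary technical point is the degenerate case in which a factor has vanishing mean Cartan torsion, where the formal quotient $1/I^2$ is ill-defined; I would circumvent this by working throughout with the equivalences $\bar{I}=0 \Leftrightarrow \bar{C}=0$ and $\tilde{I}=0 \Leftrightarrow \tilde{C}=0$ (via $\|C\|^2=I^2$) rather than dividing, so that the Riemannian factor contributes only vanishing terms on both sides.
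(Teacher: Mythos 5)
Your necessity argument is the same as the paper's: evaluate the C2-like ansatz on a mixed component such as $C_{ijl'}$, note that linearity of $f$ forces all mixed Cartan components to vanish, and conclude $\bar{I}_i\bar{I}_j\tilde{I}_{l'}=0$, hence $\bar{I}=0$ or $\tilde{I}=0$. You differ from the paper in two worthwhile ways. First, the paper closes by invoking Deicke's theorem (mean Cartan torsion zero implies Riemannian, a deep result valid for all Finsler metrics), whereas you use the elementary contraction identity $\|C\|^2=I^2$ available for C2-like metrics, which gives $I=0\Leftrightarrow C=0$ directly and also lets you sidestep the ill-defined quotient $1/I^2$ in the degenerate case; this is a cleaner and more self-contained route. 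Second, and more importantly, the paper's proof only establishes the ``only if'' direction, while you explicitly verify the converse by checking the C2-like ansatz block by block when one factor is Riemannian, using $C_{ijl}=f_K\bar{C}_{ijl}$, $I_i=\bar{I}_i$, $I_{i'}=0$ and $I^2=\tfrac{1}{f_K}\bar{I}^2$ so that the coefficients match on the pure block and both sides vanish on every block containing a tilde index. Your write-up therefore covers the full equivalence claimed in the statement, which the paper's proof does not. The one point both arguments gloss over is that the pointwise alternative ``$\bar{I}=0$ or $\tilde{I}=0$ at each point of $M'$'' needs a short connectedness argument to become ``one factor is Riemannian everywhere''; this is minor and standard, but worth a sentence.
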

						\begin{proof}
							From the definition of $C2$-like Finsler metrics we get
							\begin{equation*}
								C_{ij'k}=\frac{1}{I^2}\overline{I}_i\widetilde{I}_{j'}\overline{I}_k
							\end{equation*}
							Since $f$ is linear we have $C_{ij'k}=0$. Therefore, $\frac{1}{I^2}\overline{I}_i\widetilde{I}_{j'}\overline{I}_k=0$. Hence, either $\overline{I}_i=0$, or, $\widetilde{I}_{j'}=0$. Hence by Diecke's Theorem the result follows.
						\end{proof}
						\begin{rem}
							\textnormal{ It is important to note that results analogous to Theorem \ref{th5.1} cannot be obtained for C-reducible or semi-C-reducible metrics,  due to the presence of dimension-dependent terms $n_1$ and $n_2$.}
						\end{rem}

					\end{document}